\renewcommand{\le}{\leqslant}
\renewcommand{\ge}{\geqslant}
\newcommand{\rad}{\mathrm{rad}}
\newcommand{\RR}{\mathbb{R}}
\newcommand{\ZZ}{\mathbb{Z}}
\newcommand{\QQ}{\mathbb{Q}}
\newcommand{\PP}{\mathbb{P}}
\newcommand{\NN}{\mathbb{N}}
\newcommand{\FF}{\mathbb{F}}
\newcommand{\LLL}{\mathcal{L}}
\newcommand{\NNN}{\mathcal{N}}
\newcommand{\SSS}{\mathcal{S}}
\newcommand{\vu}{\mathbf{u}}
\newcommand{\va}{\mathbf{a}}
\newtheorem{lemma}{Lemma}
\newtheorem{theorem}{Theorem}
\newtheorem{proposition}{Proposition}
\newtheorem{problem}{Problem}
\newenvironment{proof}{\textsc{Proof. }}{\ \newline\hspace*{\fill}$\boxtimes$}
\title{On spectrum of irrationality exponents of Mahler numbers}
\author{Dzmitry Badziahin\footnote{Univerity of Sydney, School of Mathematics and Statistics, NSW 2006}}
\begin{document}

\maketitle

\begin{abstract}
We consider Mahler functions $f(z)$ which solve the functional equation
$f(z) = \frac{A(z)}{B(z)} f(z^d)$ where $\frac{A(z)}{B(z)}\in \QQ(z)$ and
$d\ge 2$ is integer. We prove that for any integer $b$ with $|b|\ge 2$
either $f(b)$ is rational or its irrationality exponent is rational. We
also compute the exact value of the irrationality exponent for $f(b)$ as
soon as the continued fraction for the corresponding Mahler function is
known. This improves the result of Bugeaud, Han, Wei and
Yao~\cite{bhwy_2016} where only an upper bound for the irrationality
exponent was provided.

\end{abstract}

\section{Introduction}

Consider a Laurent series $f(z)\in\QQ((z^{-1}))$. It is called a {\it Mahler
function} if for any $z$ inside the disc of convergence of $f$ it satisfies
the equation of the form
$$
\sum_{i=0}^n P_i(z)f(z^{d^i})=Q(z)
$$
for some integers $n\ge 1, d\ge 2$, and polynomials $P_0,\ldots, P_n, Q\in
\FF[z]$ with $P_0P_n\neq 0$. The values $f(b)$ for integers $b$ inside the
disc of convergence of $f$ are called {\em Mahler numbers}. In this paper we
investigate the following question:

\begin{problem}\label{probc}
Find the set $\LLL_M$ of irrationality exponents of irrational Mahler
numbers.
\end{problem}

Sometimes we will call the set $\LLL_M$ \emph{the spectrum} of irrationality
exponents of Mahler numbers. Recall that the irrationality exponent of a real
number $\xi$ is the supremum of real numbers $\mu$ such that the inequality
$$
\left|\xi - \frac{p}{q}\right|<q^{-\mu}
$$
has infinitely many rational solutions $p/q$. This is one of the most
important approximational properties of real numbers indicating how well
$\xi$ is approached by rationals. Note that by the classical Dirichlet
approximation theorem we always have $\mu(\xi)\ge 2$.

Similar questions have been recently risen by several authors. In 2008,
Bugeaud~\cite{bugeaud_2008} proved that for any rational $\omega$ there are
infinitely many automatic numbers with the irrationality exponent equal to
$\omega$. It is well known \cite{becker_1994} that automatic numbers are also
Mahler, therefore Bugeaud's result straightforwardly implies that $\LLL_M$
contains all rational numbers not smaller than two. Later in 2009,
Adamczewski and Rivoal~\cite{adamczewski_rivoal_2009} commented on that
result with the following problem:

\begin{problem}\label{proba}
Is it true or not that the irrationality exponent of an automatic number is
always a rational number?
\end{problem}

Bugeaud, Krieger and Shallit~\cite{bgs_2011} extended the question to the set
of morphic numbers. In particular, they show that the spectrum of
irrationality exponents of morphic numbers, on top of $\QQ$, contains every
Perron number $\omega\ge 2$. Recall that a Perron number is a positive real
algebraic integer, which is greater in absolute value than all of its
conjugates. With respect to this result the following problem was posed:

\begin{problem}\label{probb}
Determine the set of irrationality exponents of morphic numbers. In
particular, is it true that the irrationality exponent of a morphic number is
always algebraic?
\end{problem}

In some sources~\cite{gwz_2016}, Problems~\ref{proba} and~\ref{probb} were
referred as conjectures.

In this paper we restrict our research to Mahler functions $f(z)$ which solve
the following functional equation
\begin{equation}\label{def_feq}
f(z) = \frac{A(z)}{B(z)} f(z^d),\qquad A,B\in\QQ[z],\; B\neq 0.
\end{equation}

\begin{theorem}\label{th2}
Let $f(z)\in \QQ((z^{-1}))$ be a solution of~\eqref{def_feq} and $b\in\ZZ$ be
inside the disk of convergence of $f(z)$. Assume that
$A(b^{d^m})B(b^{d^m})\neq 0$ for all $m\in\ZZ_{\ge 0}$ and $f(b)$ is
irrational. Then the irrationality exponent of $f(b)$ is a rational number.
\end{theorem}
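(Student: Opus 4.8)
The plan is to exploit the self-similar structure of $f$ coming from the functional equation~\eqref{def_feq} to control the continued fraction expansion of $f(b)$. Iterating~\eqref{def_feq} gives $f(z) = R_m(z)\, f(z^{d^m})$ where $R_m(z) = \prod_{k=0}^{m-1} A(z^{d^k})/B(z^{d^k})$ is a rational function whose numerator and denominator have degree growing like a constant times $d^m$. Evaluating at $z=b$ and using that $f(b^{d^m}) \to c$ for some nonzero constant $c$ (the leading coefficient of the Laurent series, since $b^{d^m}\to\infty$), one sees that $f(b)$ is extremely well approximated by the rationals $c\,R_m(b)$, with denominators of size roughly $|b|^{C d^m}$ and approximation error governed by how fast $f(b^{d^m})$ converges to $c$, i.e.\ by a further factor of order $|b|^{-d^{m+1}}$ or so. The first step, then, is to make this precise: produce an explicit sequence $p_m/q_m$ of rationals with $\log q_m$ growing geometrically in $m$ and with $|f(b) - p_m/q_m| = q_m^{-\mu_m}$ for an explicitly computable sequence $\mu_m$ converging to some $\mu$.

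The heart of the matter is that a single such sequence only gives a lower bound for the irrationality exponent; to get the exact value (and its rationality) one must show these approximations are essentially the only good ones. Here I would bring in the continued fraction expansion of the Mahler \emph{function} $f(z)$ over $\QQ(z)$: the paper's phrasing ("as soon as the continued fraction for the corresponding Mahler function is known") signals that $f(z)$ has a continued fraction expansion $[a_0(z); a_1(z), a_2(z), \dots]$ with $a_i(z)\in\QQ(z)$, and that the functional equation forces this sequence of partial quotients to be eventually quasi-periodic in a suitable sense — the degrees $\deg a_i(z)$ satisfy a linear-type recurrence tied to multiplication of the index by $d$. The plan is: (i) establish the structure of the partial quotients $a_i(z)$ forced by~\eqref{def_feq}; (ii) specialize $z = b$, checking that the specialization does not cause catastrophic cancellation or denominator blow-up — this is exactly where the hypothesis $A(b^{d^m})B(b^{d^m})\neq 0$ for all $m$ is used, guaranteeing the convergents of $f(z)$ specialize to genuine convergents (or near-convergents) of $f(b)$; (iii) read off the irrationality exponent from $\mu(f(b)) = 1 + \limsup_{i} \frac{\deg q_{i+1}}{\deg q_i}$-type formula, where $q_i(b)$ are the convergent denominators, and observe that because the degree sequence grows in a $d$-adically self-similar fashion this $\limsup$ is a ratio of quantities that are eventually given by a fixed linear recurrence, hence rational.

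Concretely, I expect the degrees $D_i := \deg q_i(z)$ of the convergent denominators to satisfy, for $i$ large, a relation of the shape $D_{\lceil i/d\rceil}$ or $D_{di}$ being comparable to $d\cdot D_i$ plus bounded error, coming from the fact that the functional equation relates the tail of the continued fraction at "level $m$" to the whole expansion scaled by $z\mapsto z^d$. Iterating, $D_i$ along the relevant subsequence behaves like $\Theta(\lambda^m)$ for an explicit $\lambda$ (a power of $d$, or a root of the characteristic polynomial of a small integer matrix), and the successive ratios $D_{i+1}/D_i$ cluster at finitely many rational values; the irrationality exponent is $1$ plus the largest of these. Finiteness and rationality of the cluster set is the crux.

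The main obstacle I anticipate is step (ii): ensuring that specialization $z=b$ is benign. A priori, evaluating the partial quotients $a_i(z)$ at $z=b$ could make some $a_i(b)$ much smaller in absolute value than $|b|^{\deg a_i}$ (if $b$ is near a root), or could make denominators of the $a_i$ vanish; either would destroy the clean correspondence between the continued fraction of $f(z)$ and that of $f(b)$. The hypothesis $A(b^{d^m})B(b^{d^m})\neq 0$ handles the poles, but controlling the \emph{size} of $a_i(b)$ from below — showing that only boundedly many partial quotients can be anomalously small, so that the $\limsup$ defining $\mu(f(b))$ still equals the one defined by degrees — will require a quantitative, probably $p$-adic or height-theoretic, argument (e.g.\ a resultant/discriminant bound showing the "bad" specializations form a set not hit infinitely often by the orbit $b, b^d, b^{d^2}, \dots$). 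I would organize the write-up so that this quantitative specialization lemma is isolated as its own lemma, with the rationality of the exponent then following formally from the degree recurrence.
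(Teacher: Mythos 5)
Your overall architecture matches the paper's: iterate the functional equation to produce a dense family of good rational approximations, prove the exact formula $\mu(f(b)) = 1+\limsup_{k} d_{k+1}/d_k$ in terms of the degrees $d_k$ of the convergent denominators of the Laurent series $f(z)$ (this is the paper's Theorem~1), and then argue that the self-similar action of $z\mapsto z^d$ on the continued fraction makes this $\limsup$ rational. However, you have located the difficulty in the wrong place, and you leave the actual crux unproved. The specialization issue you flag --- anomalously small values $a_i(b)$, requiring a ``$p$-adic or height-theoretic'' lemma --- does not arise in the paper's argument at all. The approximations used are not the convergents $p_k(b)/q_k(b)$ themselves but the iterates $q_{k,m}(b)=\prod_{t=0}^{m-1}B(b^{d^t})\,q_k(b^{d^m})$; for fixed $k$ and $m\to\infty$ their sizes are controlled exactly (the paper's Lemma~4), because the hypothesis $A(b^{d^m})B(b^{d^m})\neq 0$ makes the relevant infinite products converge to nonzero limits, and a refinement of the Adamczewski--Rivoal lemma (Proposition~1) converts this two-parameter family of approximations into the matching upper bound. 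No lower bound on the size of specialized partial quotients is ever needed.

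The step you wave at --- ``the successive ratios cluster at finitely many rational values; finiteness and rationality of the cluster set is the crux'' --- is indeed the crux, and your degree recurrence alone does not deliver it. When one passes from a gap $[u_n,v_n]$ in the set of degrees to its successor via the functional equation, the recursion is $u_{n+1}=du_n+r_b-r_{c,n}$ and $v_{n+1}=dv_n-r_a+r_{c,n}$, where $r_{c,n}=\deg\gcd\bigl(A(z)p_{u_n}(z^d),\,B(z)q_{u_n}(z^d)\bigr)$ measures the cancellation at each step. This sequence is bounded but a priori arbitrary; if it were an arbitrary bounded sequence, the limit of $v_n/u_n$ could be any real number in an interval and rationality would fail. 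The paper's key technical result (Proposition~2) is that $(r_{c,n})$ is \emph{eventually periodic}, proved by factoring all polynomials involved into cyclotomic, non-cyclotomic and power-of-$z$ parts and analyzing how $\Phi_n(z^{d^m})$ decomposes into cyclotomic polynomials; only with periodicity in hand do the limits along arithmetic progressions of $n$ become explicit rational numbers. One also needs that only finitely many ``primitive'' gaps can contribute to the $\limsup$ (the paper's Lemmata~6 and~7). Your proposal would need to supply both of these ingredients to become a proof.
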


In other words, Theorem~\ref{th2} shows that Mahler numbers $f(b)$ for
solutions of~\eqref{def_feq} do not give any extra contribution to the
spectrum $\LLL_M$ on top of the numbers constructed by Bugeaud.

The key ingredient in the proof of Theorem~\ref{th2} is that $\mu(f(b))$ can
be computed from the information on the continued fraction of the Laurent
series $f(z)$. This phenomenon was observed by Guo, Wu and
Wen~\cite{gww_2014} and then developed by Bugeaud, Han, Wen and
Yao~\cite{bhwy_2016}. They computed the upper bound of the irrationality
exponent of $f(b)$ depending on the distribution of non-zero Hankel
determinants. In this paper we make their result much stronger by computing
the precise value of $\mu(f(b))$.

Recall that a Laurent series $f(z)\in \QQ((z^{-1}))$ admits the continued
fraction expansion
$$
f(z) = [a_0(z), a_1(z), \ldots, a_k(z),\ldots],
$$
where $a_i\in \QQ[z]$. It is finite if and only if $f(z)$ is a rational
function. As in the case of real numbers, we call the rational function
$[a_0(z), a_1(z), \ldots, a_n(z)] = \frac{p_k(z)}{q_k(z)}$ {\it $n$th
convergent of $f$}. Assuming that $p_k$ and $q_k$ are coprime, denote by
$d_k$ the degree of the denominator $q_k$.

\begin{theorem}\label{th1}
Let $f(z)\in \QQ((z^{-1}))$ be a Laurent series which solves~\eqref{def_feq}
and is not a rational function. Let $b\in \ZZ$ with $|b|\ge 2$ be inside the
disk of convergence of $f$ such that $A(b^{d^m})B(b^{d^m}) \neq 0$ for any
$m\in\ZZ_{\ge 0}$. Then the irrationality exponent of $f(b)$ equals
\begin{equation}\label{th1_eq}
\mu(f(b)) = 1 + \limsup_{k\to\infty}\frac{d_{k+1}}{d_k}.
\end{equation}
\end{theorem}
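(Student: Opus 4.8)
The plan is to translate the classical theory of continued fractions for real numbers into the setting of the Laurent series $f(z)$ evaluated at $z=b$, and to exploit the Mahler functional equation~\eqref{def_feq} to control the growth of the degrees $d_k$. Writing $p_k(z)/q_k(z)$ for the convergents of $f(z)$, the first step is to show that the rational numbers $p_k(b)/q_k(b)$ are extremely good rational approximations to $f(b)$: from the standard identity $f(z)-p_k(z)/q_k(z) = O(z^{-d_k-d_{k+1}})$ (coming from $\deg q_{k+1} = d_{k+1}$ and the fact that $a_{k+1}$ has degree $d_{k+1}-d_k$), substituting $z=b$ with $|b|\ge 2$ gives $|f(b)-p_k(b)/q_k(b)| \asymp |b|^{-d_k-d_{k+1}}$, up to factors that are subexponential in $d_k$. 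Since $|q_k(b)| \asymp |b|^{d_k}$, this yields an approximation of quality roughly $|q_k(b)|^{-(1+d_{k+1}/d_k)}$, so that the exponents $1 + d_{k+1}/d_k$ are all attained along the subsequence of convergents, giving the lower bound $\mu(f(b)) \ge 1 + \limsup d_{k+1}/d_k$.

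The harder direction is the upper bound: one must show that \emph{no} rational $p/q$ does essentially better than the $p_k(b)/q_k(b)$, i.e. that the convergents of $f(z)$ evaluated at $b$ are, up to controlled error, the best rational approximations to $f(b)$. The obstacle here is that $p_k(b)$ and $q_k(b)$ need not be coprime, and the good approximations to $f(b)$ need not come from convergents of $f(z)$ at all — one could in principle combine several convergents. The strategy I would use is to fix a rational $p/q$ with $|f(b)-p/q|$ small, pick the index $k$ with $|q_k(b)| \le |q| < |q_{k+1}(b)|$ (roughly), and estimate $|q f(b) - p|$ from below. Using $q_k p_{k+1} - p_k q_{k+1} = \pm 1$ as polynomials, hence $q_k(b)p_{k+1}(b) - p_k(b)q_{k+1}(b) = \pm 1$ as integers, one shows that the lattice of integer vectors $(q_k(b),p_k(b))$, $(q_{k+1}(b),p_{k+1}(b))$ has a controlled covolume, and a $p/q$ beating the exponent $1+d_{k+1}/d_k$ by more than a subexponential factor would force $q$ to be a non-trivial combination that is too small, a contradiction. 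The subexponential slack is absorbed because both $|b|^{d_k}$ and the implied constants grow at most like $C^{d_k}$, and in the $\limsup$ with the exponent divided by $d_k$ these contribute nothing.

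The final step is to go from the pointwise inequalities $1 + d_{k+1}/d_k - o(1) \le \mu \le 1 + d_{k+1}/d_k + o(1)$ along the relevant indices to the clean formula~\eqref{th1_eq}. Here one needs the functional equation: it guarantees (this is implicit in the work of Bugeaud--Han--Wen--Yao~\cite{bhwy_2016}) that the sequence $(d_k)$ does not grow too erratically — in particular $d_{k+1}/d_k$ is bounded, so $f(b)$ has finite irrationality exponent — and that the $\limsup$ is genuinely achieved along a subsequence on which the above estimates are tight. One combines the lower bound from the first paragraph (valid along \emph{all} $k$) with the upper bound from the second paragraph (valid for \emph{all} rational $p/q$, hence controlling the full $\limsup$ over arbitrary approximations) to conclude $\mu(f(b)) = 1 + \limsup_{k\to\infty} d_{k+1}/d_k$. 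I expect the main technical obstacle to be making the upper-bound argument robust to the possible non-coprimality of $p_k(b)$ and $q_k(b)$ and to approximations not arising from a single convergent; this is exactly where the rigidity provided by~\eqref{def_feq}, and the resulting structure of the Hankel determinants of $f$, has to be invoked rather than pure continued-fraction formalism.
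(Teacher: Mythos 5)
Your proposal does not follow the paper's route, and as written it has genuine gaps, all traceable to one missing idea: the paper never works with the convergents $p_k(b)/q_k(b)$ evaluated directly at $b$. Instead it iterates the functional equation~\eqref{def_feq} $m$ times to manufacture the auxiliary approximations $p_{k,m}(b)/q_{k,m}(b)$ with $q_{k,m}(z)=\prod_{t=0}^{m-1}B(z^{d^t})\,q_k(z^{d^m})$, and all asymptotics are taken as $m\to\infty$ with $k$ fixed. Your first step already fails without this: the claims $|q_k(b)|\asymp|b|^{d_k}$ and $|f(b)-p_k(b)/q_k(b)|\asymp|b|^{-d_k-d_{k+1}}$ ``up to subexponential factors'' are not justified uniformly in $k$. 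For each $k$ these are single numbers; the coefficients of $q_k$ and of the tail $\sum_{i\ge d_{k+1}}c_{k,i}z^{-i}$ are not controlled uniformly in $k$, and at the fixed point $z=b$ the tail can be anomalously small or large compared to its leading term, and $q_k(b)$ can be anomalously small through cancellation. Lemma~\ref{lem4} circumvents exactly this by replacing $b$ with $b^{d^m}$ and letting $m\to\infty$, so that the implied constants may depend on $k$ harmlessly.

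The upper bound is where the proposal really breaks down. The identity $p_{k+1}q_k-p_kq_{k+1}=\pm1$ holds in $\QQ[z]$, so after clearing denominators to get integer pairs the determinant becomes $\pm N_kN_{k+1}$ for uncontrolled integers $N_k$; the covolume of your lattice is therefore unbounded, an arbitrary integer vector $(q,p)$ need not decompose with small integer coefficients, and no best-approximation property for the real number $f(b)$ follows. Even granting it, the resulting lower bound $|qf(b)-p|\ge|q_k(b)f(b)-p_k(b)|$ is useless whenever the right-hand side is anomalously small, which cannot be excluded at a fixed $b$ for the reasons above. The paper instead feeds the doubly indexed family $p_{k,m}(b)/q_{k,m}(b)$, $k_0\le k\le K$, $m\ge M$, into the two-sequence criterion of Proposition~\ref{prop1} (a variant of Adamczewski--Rivoal): for fixed $k$ the denominators $|q_{k,m}(b)|$ grow by the controlled factor $d$ in the exponent as $m$ increases, supplying the density (Condition~(c)) that the evaluated convergents alone lack, while the approximation exponent stays at $1+d_{k+1}/d_k+o(1)$ as $k_0\to\infty$, so that $\theta_n/\delta_n\to1$ and the criterion returns exactly $1+\limsup d_{k+1}/d_k$. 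Your closing appeal to ``the rigidity provided by the functional equation'' names the right ingredient but is never made operational; without the iterated approximations the argument does not go through.
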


Unfortunately, Theorem~\ref{th1} does not always allow to compute $\mu(f(b))$
based only on the knowledge of polynomials $A$ and $B$ because the
formula~\eqref{th1_eq} requires knowledge of the whole continued fraction of
$f$ which is usually a difficult task. However in many cases, as soon as we
know that $\mu(f(b))>2$ we can compute the precise value of the irrationality
exponent of $f(b)$ after computing only finitely many convergents of $f$. We
demonstrate the method by computing the irrationality exponents of all Mahler
numbers from~\cite{badziahin_2017} for which we know that $\mu(f(b))>2$.

\begin{theorem}\label{th3}
Let $q_{\va} = g_{a_1,a_2}(z)\in \ZZ((z^{-1}))$ solve the equation
\begin{equation}\label{th3_eq}
g_\va (z) = (z^2 +a_1z + a_2) g_\va(z^3);\quad a_1,a_2\in\ZZ.
\end{equation}
Then for any integer $b$, $|b|\ge 2$ we have
\begin{itemize}
\item[(a)] for any $s\in \ZZ$ if $f_{s,s^2}(b)$ is irrational then
    $\mu(f_{s,s^2}(b))=3$;
\item[(b)] for any $s\in \ZZ$ if $f_{s^3,-s^2(s^2+1)}(b)$ is irrational
    then $\mu(f_{s^3,-s^2(s^2+1)}(b))=3$;
\item[(a)] if $f_{\pm2,1}(b)$ is irrational then
    $\mu(f_{\pm2,1}(b))=\frac{12}{5}$.
\end{itemize}

\end{theorem}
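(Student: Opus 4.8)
The plan is to apply Theorem~\ref{th1} in each case, so the entire task reduces to a finite computation of the continued fraction expansion of the relevant Laurent series $g_{a_1,a_2}(z)$ until the ratios $d_{k+1}/d_k$ become periodic (in the appropriate sense). First I would record the basic self-similarity: if $g(z) = [a_0(z),a_1(z),\ldots]$ solves $g(z) = R(z)g(z^d)$ with $R(z) = z^2+a_1z+a_2$, then the continued fraction of $g(z^d)$ is obtained by substituting $z\mapsto z^d$ in every partial quotient, and multiplying by $R(z)$ is a Möbius transformation that can be folded into the head of the continued fraction. Concretely, one shows that the sequence of denominator degrees $(d_k)$ satisfies a functional relation: a block of the expansion at "level $m$" (governing $z^{d^m}$) reappears rescaled by $d$ at level $m+1$, plus a bounded correction coming from the factor $z^2+a_1z+a_2$. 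Tracking how $\deg q_k$ grows under this recursion yields $\limsup d_{k+1}/d_k$ as a ratio determined by finitely many initial partial quotients together with the multiplier $d=3$.

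For part (a), the equation is $g_{s,s^2}(z) = (z^2+sz+s^2)g_{s,s^2}(z^3) = (z^3-s^3)/(z-s)\cdot g_{s,s^2}(z^3)$ — the key observation is that $z^2+sz+s^2$ is (up to the linear factor $z-s$) the norm form making $z^3-s^3$ split, which should force a particularly clean continued fraction in which consecutive denominator degrees jump by exactly a factor of $3$ in the limit, giving $\mu = 1+2 = 3$. I would verify this by computing the first few convergents explicitly: compute $a_0, a_1, a_2$, observe that $\deg q_{k+1}/\deg q_k \to 2$, and then invoke the self-similarity to see this persists, so $\limsup d_{k+1}/d_k = 2$. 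Part (b) is analogous but the relevant algebraic identity is that $z^3 + s^3z^2 - s^2(s^2+1) = $ (something that factors compatibly with the $z\mapsto z^3$ substitution); I expect the same value $\mu=3$ because the degree-$2$ multiplier interacts with $d=3$ in the same way, the $\limsup$ again equalling $2$.

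Part (c), the case $f_{\pm 2,1}$, is the genuinely different one: here $z^2\pm 2z+1 = (z\pm1)^2$ is a perfect square, so the multiplier has a repeated root and the continued fraction does \emph{not} collapse to the clean "multiply degree by $3$" pattern. Instead one finds a longer eventual period in the sequence of degree-ratios. I would compute enough convergents of $g_{\pm2,1}(z)$ to detect a periodic pattern in $(d_k)$ of the form $d_{k+1} = 3d_k$ on some steps and $d_{k+1} = c\cdot d_k$ with a smaller constant on others, arranged so that over one full period of length, say, $5$ the denominator degree multiplies by a fixed factor; the $\limsup$ of $d_{k+1}/d_k$ then works out to $7/5$, giving $\mu = 1 + 7/5 = 12/5$. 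The main obstacle is precisely this last computation: identifying the eventual period in the continued fraction of $g_{\pm2,1}$ and proving rigorously (via the functional equation, not just numerically) that the pattern of degree jumps is exactly periodic from some point on — once that periodic block is pinned down, $\limsup d_{k+1}/d_k$ is read off immediately and Theorem~\ref{th1} finishes all three parts. A secondary point to check is that in each case $f(b)$ can indeed be irrational for the stated $b$, but that is hypothesised in the statement, so nothing needs to be proved there; one only needs the continued fraction of $f$ to be infinite, i.e.\ that $g_{a_1,a_2}$ is not a rational function, which follows from the growth of the $d_k$ just computed.
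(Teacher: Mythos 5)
Your high-level strategy (reduce everything to Theorem~\ref{th1} and compute $\limsup_k d_{k+1}/d_k$ from a self-similar structure of the continued fraction) is the right one, and your numerical answers are correct, but two essential steps are missing or wrong. First, the propagation mechanism you describe -- ``multiplying by $R(z)$ is a M\"obius transformation that can be folded into the head of the continued fraction'' -- is not a valid operation on continued fractions of Laurent series: multiplying a continued fraction by a polynomial scrambles all partial quotients, not just the head. What actually propagates is a single \emph{big gap}: if $[u,v]$ is a gap in the set $\Phi$ of denominator degrees with $v-u>(r_a+r_b)/(d-1)$, then $A(z)p_u(z^d)/(B(z)q_u(z^d))$, after cancelling the gcd of degree $r_{c,n}$, is again a convergent, and the new gap endpoints obey $u_{n+1}=du_n+r_b-r_{c,n}$, $v_{n+1}=dv_n-r_a+r_{c,n}$ (equations~\eqref{form_un}). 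The whole computation in each family consists of (i) locating one primitive gap explicitly ($[1,3]$, $[2,5]$, $[5,8]$ respectively) and (ii) proving $r_{c,n}=0$ for all $n$ by a concrete coprimality argument (moduli of roots for family (a), real versus non-real roots for (b), cyclotomic factor bookkeeping via Lemma~\ref{lem9} for (c)). With $r_a=2$, $r_b=0$, $d=3$ this gives $v_n/u_n\to(v_0-1)/u_0$, i.e.\ $2$, $2$ and $7/5$. In particular your account of family (c) misdescribes where $7/5$ comes from: there is no ``period of length $5$'' in the degree ratios; the $5$ is just $u_0$ for the primitive gap $[5,8]$.

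Second, and more seriously, identifying one family of gaps only yields the \emph{lower} bound $\mu\ge 1+\lim v_n/u_n$. To get equality you must rule out every other primitive gap contributing a larger $\limsup$. This is done by combining Lemma~\ref{lem6} (primitive gaps have size at most $\frac{2d-1}{d-1}(r_a+r_b)=5$ here) with Lemma~\ref{lem7}, which shows a primitive gap starting at $u$ can only beat the ratio already found if $u$ is below an explicit bound ($u<7$ for (a),(b); $u<16$ for (c)). One must then verify by finite computation that no such competing primitive gap exists -- for family (c) this requires checking that the integers $8,\dots,15$ all lie in $\Phi(g_{\va})$, e.g.\ via non-vanishing of the corresponding Hankel determinants. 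Your proposal contains no trace of this exclusion step, so as written it proves only the lower bounds $\mu\ge 3$ and $\mu\ge 12/5$, not the stated equalities.
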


\section{Useful estimate of irrationality exponent}

The following proposition is a modification of Lemma~4.1
from~\cite{adamczewski_rivoal_2009} which we will need in our proofs. But it
may be of independent interest.

\begin{proposition}\label{prop1}
Let $\alpha\in\RR$. Assume that there exist two sequences
$(\frac{p_n}{q_n})_{n\in \NN}\in\QQ$ and $(\frac{p'_n}{q'_n})_{n\in
\NN}\in\QQ$ of rational approximations to $\alpha$ and three sequences
$\theta_n$, $\delta_n$ and $\tau_n$ of real numbers with $\theta_n\ge 1,
\delta_n> 0$, $\tau_n>0$ such that
\begin{itemize}
\item{(a)} $q'_n\ll q_n^{\theta_n}$;
\item{(b)} $\displaystyle \left|\alpha-\frac{p_n}{q_n}\right| \asymp
    q_n^{-1-\delta_n}$;\quad $\displaystyle
    \left|\alpha-\frac{p'_n}{q'_n}\right| \asymp (q'_n)^{-1-\tau_n}$;
\item{(c)} $(q'_n)^{\tau_n}\gg q_{n+1}^{\delta_{n+1}}$, \quad and\quad
    $q_n^{\delta_n}\to\infty$ as $n\to\infty$.
\end{itemize}
for all $n\in\NN$. Then we have that
\begin{equation}\label{prop1_eq1}
\mu(\alpha) \le \limsup_{n\to\infty} \max\left
\{1+\frac{\theta_n}{\delta_n}, \frac{(1+\tau_n)\theta_n}{\delta_n}\right\}.
\end{equation}
\end{proposition}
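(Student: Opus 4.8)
The plan is to bound $\mu(\alpha)$ by analysing how well a generic rational $p/q$ can approximate $\alpha$, playing the two given sequences of approximations against it. Fix a rational $p/q \ne p_n/q_n$ with $q$ large, and let $n$ be the index determined by $q_n \le q < q_{n+1}$ (such an $n$ exists once $q$ is large, since $q_n \to \infty$). The idea is that $p/q$ must be ``far'' from at least one of $p_n/q_n$ and $p'_n/q'_n$: if $p/q$ coincided with neither, then by the triangle inequality combined with the standard lower bound $|p/q - p_m/q_m| \ge 1/(q q_m)$ for distinct rationals, we control $|\alpha - p/q|$ from below.

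The key steps, in order. First, I would treat the easy case $q < q'_n$: here $p/q$ is a rational of denominator below $q'_n$, and I compare with $p_n/q_n$. Either $p/q = p_n/q_n$, which is impossible for $q$ strictly between $q_n$ and $q_{n+1}$ unless $q = q_n$ (handled separately), or $|p/q - p_n/q_n| \ge 1/(q q_n)$; combined with $|\alpha - p_n/q_n| \asymp q_n^{-1-\delta_n}$ and the fact that $q_n^{\delta_n}\to\infty$, this forces $|\alpha - p/q| \gg 1/(q q_n) \gg q^{-1}\cdot q_n^{-1} \ge q^{-2}$, and more precisely, writing $q_n \le q$, one gets an exponent governed by $1 + \theta_n/\delta_n$ after optimising the range of $q$ against the gap estimate $(q'_n)^{\tau_n} \gg q_{n+1}^{\delta_{n+1}}$. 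Second, for $q'_n \le q < q_{n+1}$, I compare $p/q$ with $p'_n/q'_n$: using $|p/q - p'_n/q'_n| \ge 1/(q q'_n)$ and $|\alpha - p'_n/q'_n| \asymp (q'_n)^{-1-\tau_n}$ together with hypothesis (a), $q'_n \ll q_n^{\theta_n}$, one obtains $|\alpha - p/q| \gg q^{-1}(q'_n)^{-1}$, and since $q < q_{n+1}$ while $(q'_n)^{\tau_n} \gg q_{n+1}^{\delta_{n+1}}$ one can bound $q$ in terms of $q'_n$ and hence of $q_n$, yielding an exponent controlled by $(1+\tau_n)\theta_n/\delta_n$. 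Third, I would combine the two regimes: any large $q$ falls in one of them, so $|\alpha - p/q| \gg q^{-M_n}$ with $M_n = \max\{1 + \theta_n/\delta_n,\ (1+\tau_n)\theta_n/\delta_n\}$, and letting $q \to \infty$ (hence $n\to\infty$) gives $\mu(\alpha) \le \limsup_n M_n$, which is \eqref{prop1_eq1}.

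The delicate point — and the main obstacle — is the bookkeeping in the second regime: one must show that when $q'_n \le q < q_{n+1}$, the quantity $q^{-1}(q'_n)^{-1}$ is at least $q^{-M_n}$, i.e.\ that $q'_n \ll q^{M_n - 1}$. Since $q$ can be as small as $q'_n$ itself, this requires $(q'_n)^{2} \ll (q'_n)^{M_n}$ at the low end, which is fine as $M_n \ge 2$, but at the top end $q$ near $q_{n+1}$ one instead needs the gap hypothesis (c) to guarantee $q_{n+1}$ is not too large compared with $q'_n$; reconciling the two ends, and checking that the worst case over $q\in[q'_n, q_{n+1})$ is exactly captured by the $\max$ in \eqref{prop1_eq1}, is where the argument of Lemma~4.1 of~\cite{adamczewski_rivoal_2009} must be adapted, since our hypotheses are stated with the auxiliary exponents $\theta_n, \delta_n, \tau_n$ rather than in the cleaner form there. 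I would also take care that the implied constants in $\asymp$ and $\ll$ are absorbed harmlessly, which is legitimate because they contribute only $O(\log q)$ to the exponent and wash out in the $\limsup$.
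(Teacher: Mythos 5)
Your strategy (localise $q$ by $q_n\le q<q_{n+1}$, then compare $p/q$ with $p_n/q_n$ or with $p'_n/q'_n$ according to two sub-regimes) is not the paper's, and as written it contains a genuine gap rather than mere bookkeeping. To extract $|\alpha-p/q|\ge \tfrac{1}{2}|p/q-p_m/q_m|\ge (2qq_m)^{-1}$ from the triangle inequality you must know that the comparison fraction is closer to $\alpha$ than half the separation, i.e.\ that $c\,q_m^{-1-\delta_m}\le (2qq_m)^{-1}$, equivalently $2cq\le q_m^{\delta_m}$. In your first regime you take $q\ge q_n$ and compare with $p_n/q_n$, so the required inequality $2cq\le q_n^{\delta_n}$ fails outright whenever $\delta_n\le 1$, and more generally whenever $q>q_n^{\delta_n}/(2c)$ --- which your regime does not exclude, since $q$ may run up to about $q'_n\asymp q_n^{\theta_n}$. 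The observation that $q_n^{\delta_n}\to\infty$ does not repair this, so the claimed bound $|\alpha-p/q|\gg (qq_n)^{-1}$ does not follow. The same difficulty is exactly the one you yourself flag as ``the delicate point'' in the second regime and then defer to ``adapting Lemma~4.1 of Adamczewski--Rivoal'' without resolving it. A further unhandled case is $p/q=p_n/q_n$: there $|\alpha-p/q|\asymp q_n^{-1-\delta_n}$, and $1+\delta_n$ is not visibly dominated by $\max\{1+\theta_n/\delta_n,\,(1+\tau_n)\theta_n/\delta_n\}$; excluding a contribution $1+\delta_n$ to $\mu(\alpha)$ itself requires the very comparison argument you are trying to set up.

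The missing idea is the choice of index the paper makes: given $p/q$ with $q$ large, let $c$ be the implied constant in $|\alpha-p'_n/q'_n|\le c(q'_n)^{-1-\tau_n}$ and pick the \emph{minimal} $n$ with $2cq\le (q'_n)^{\tau_n}$. This upper bound on $q$ is precisely what legitimises the triangle inequality against $p'_n/q'_n$ (only this sequence is ever compared with $p/q$; the fractions $p_n/q_n$ never are), giving $|\alpha-p/q|\ge (2qq'_n)^{-1}$ when $p/q\ne p'_n/q'_n$. Minimality of $n$ combined with hypothesis~(c), applied at the previous index, then gives the lower bound $q\gg q_n^{\delta_n}$, which converts $q'_n\ll q_n^{\theta_n}$ into $q'_n\ll q^{\theta_n/\delta_n}$ and produces the exponent $1+\theta_n/\delta_n$; the remaining case $p/q=p'_n/q'_n$ produces $(1+\tau_n)\theta_n/\delta_n$. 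Your proposal does not contain this mechanism, and without it neither of your two regimes closes, so the argument as proposed does not constitute a proof.
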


The immediate corollary of this proposition is that if the sequences
$\theta_n$ and $\delta_n$ satisfy $\theta_n/\delta_n \to 1$ as $n\to\infty$
and $\tau_n\ge 1$ then the sequence of approximations $p'_n/q'_n$ to $\alpha$
is nearly optimal, i.e. $$\mu(\alpha) = \limsup_{n\to \infty} (1+\tau_n).$$

\begin{proof}
Denote by $c$ the implied constant from Condition~(b), i.e. the constant
which satisfies the inequality
\begin{equation}\label{prop1_eq2}
\left|\alpha-\frac{p'_n}{q'_n}\right| \le c(q'_n)^{-1-\tau_n}
\end{equation}
for all $n\in\NN$. Let $p/q$ be a rational number where the denominator $q$
is large enough. We choose the minimal integer $n$ such that $2cq\le
(q'_n)^{\tau_n}$. Condition~(c) guarantees that such $n$ exists. Then, by the
choice of $n$ and Condition~(c), we have that $q\gg q_n^{\delta_n}$. By the
triangle inequality we have
$$
\left|\alpha-\frac{p}{q}\right| \ge \left|\frac{p}{q}-\frac{p'_n}{q'_n}\right| - \left|\alpha-\frac{p'_n}{q'_n}\right|.
$$
Now we have two possibilities:

\noindent{\bf (1)} The case $p/q\neq p'_n/q'_n$. Then $|p/q - p'_n/q'_n| \ge
(qq'_n)^{-1}$ and from~\eqref{prop1_eq2} we get that $|\alpha - p/q|\ge
(2qq'_n)^{-1}$. We then apply Condition~(a) to get
$$
\left|\alpha - \frac{p}{q}\right| \gg \frac{1}{q q_n^{\theta_n}} \gg \frac{1}{q^{1+\frac{\theta_n}{\delta_n}}}.
$$

\noindent{\bf (2)} The case $p/q= p'_n/q'_n$. Then we have
$$
\left|\alpha - \frac{p}{q}\right| = \left|\alpha - \frac{p'_n}{q'_n}\right|\gg (q'_n)^{-1-\tau_n}\gg q^{-(1+\tau_n)\frac{\theta_n}{\delta_n}}.
$$

To conclude the proof of the proposition, consider some number $\mu$ strictly
bigger than the right hand side in~\eqref{prop1_eq1}. Then there exists
$n_0\in \NN$ such that $\mu> 1+\theta_n/\delta_n$ and $\mu >
(1+\tau_n)\theta_n/\delta_n$ for all $n\ge n_0$. Choose $n_1\ge n_0$ such
that for $n\le n_0$ we have $(q'_n)^{\tau_n} < (q'_{n_1})^{\tau_{n_1}}$. Then
for any $p/q$ with $2cq>(q'_{n_1})^{\tau_{n_1}}$ we have that
$$
\left|\alpha - \frac{p}{q}\right| \gg q^{-\mu}
$$
and hence $\mu(\alpha)\le \mu$.
\end{proof}

\section{Irrationality exponent of $f$}

For convenience, denote the leading coefficients of $A$ and $B$ from
equation~\eqref{def_feq} by $\alpha$ and $\beta$, and denote the degrees of
$A$ and $B$ by $r_a$ and $r_b$ respectively.

Consider the sequence $(p_k(z)/q_k(z))_{k\in \ZZ_{\ge 0}}$ of the convergents
of $f$. Denote the degrees of $q_k$ by $d_k$. Then, by the standard property
of convergents, we have
\begin{equation}\label{eq_qfp}
q_k(z) f(z) - p_k(z) = \sum_{i=d_{k+1}}^\infty c_{k,i}z^{-i},
\end{equation}
where $c_{k,j}$ are some real coefficients, and $c_{k,d_{k+1}}$ is always
nonzero.

{\textsc{Proof of Theorem~\ref{th1}}.} By substituting $z^d$ instead of $z$
in equation~\eqref{eq_qfp} and then using the functional
relation~\eqref{def_feq} for $f(z^d)$ we get that:
\begin{equation}\label{eq_pqk1}
B(z)q_k(z^d) f(z) - A(z)p_k(z^d) = A(z) \sum_{i=d_{k+1}}^\infty c_{k,i}z^{-di}.
\end{equation}
By repeating this procedure $m$ times we get the following equation:
\begin{equation}\label{eq_pqkm}
q_{k,m}(z) f(z) - p_{k,m}(z) = U(z) \sum_{i=d_{k+1}}^\infty c_{k,i}z^{-d^mi},
\end{equation}
where
\begin{equation}\label{eq_defpqkm}
q_{k,m}(z) = \prod_{t=0}^{m-1} B(z^{d^t}) q_k(z^{d^m}),\quad p_{k,m}(z) =\prod_{t=0}^{m-1} A(z^{d^t}) p_k(z^{d^m}),
\quad U(z) = \prod_{t=0}^{m-1} A(z^{d^t}).
\end{equation}

\begin{lemma}\label{lem4}
Let $b\in \RR$ with $|b|>1$ be inside the disc of convergence of $f$. Assume
that for all $t\in \ZZ_{\ge 0}$, $A(b^{d^t}) B(b^{d^t})\neq 0$. Then, for $m$
large enough, one has:
\begin{equation}\label{lem4_eq}
|q_{k,m}(b)| \asymp \beta^m |b|^{d^m\left(\frac{r_b}{d-1} + d_k\right)}, \quad\mbox{and}\quad
|q_{k,m}(b) f(b) - p_{k,m}(b)| \asymp \alpha^m |b|^{d^m\left(\frac{r_a}{d-1} - d_{k+1}\right)}.
\end{equation}
Here, the constants inside the ``$\asymp$'' signs may depend on $A$, $B$, and
$k$, but do not depend on $m$.
\end{lemma}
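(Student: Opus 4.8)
The plan is to analyze the two quantities in~\eqref{lem4_eq} by plugging $z=b$ into the formulas~\eqref{eq_defpqkm} and~\eqref{eq_pqkm}, and estimating each factor asymptotically in $m$. First I would handle $q_{k,m}(b) = \prod_{t=0}^{m-1} B(b^{d^t})\cdot q_k(b^{d^m})$. For each factor $B(b^{d^t})$, since $|b|>1$ the argument $b^{d^t}$ is large, so $B(b^{d^t}) = \beta\, b^{d^t r_b}(1 + o(1))$ as $t\to\infty$; the finitely many small-$t$ factors contribute only a bounded multiplicative constant (here I use the hypothesis $B(b^{d^t})\neq 0$ to ensure no factor vanishes). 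Taking the product over $t=0,\dots,m-1$ gives a leading behaviour $\beta^m\, |b|^{r_b(1+d+\cdots+d^{m-1})} = \beta^m |b|^{r_b\frac{d^m-1}{d-1}}$, and the convergence of $\prod_t(1+o(1))$ (the error terms are geometrically small in $t$) shows the product is $\asymp \beta^m |b|^{r_b\frac{d^m-1}{d-1}}$ with an $m$-independent implied constant. Similarly $q_k(b^{d^m}) \asymp |b|^{d^m d_k}$ since $q_k$ has degree $d_k$ and its leading coefficient is a fixed nonzero constant. Multiplying and absorbing the $|b|^{-r_b/(d-1)}$ into the constant yields the first estimate.

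For the second quantity I would start from~\eqref{eq_pqkm} evaluated at $z=b$:
$$
q_{k,m}(b)f(b) - p_{k,m}(b) = U(b)\sum_{i=d_{k+1}}^\infty c_{k,i} b^{-d^m i}.
$$
The factor $U(b) = \prod_{t=0}^{m-1}A(b^{d^t})$ is estimated exactly as above, giving $U(b)\asymp \alpha^m |b|^{r_a\frac{d^m-1}{d-1}}$. For the tail sum, the dominant term is $i=d_{k+1}$: we have $|c_{k,d_{k+1}}b^{-d^m d_{k+1}}|$ times $|1 + \sum_{i>d_{k+1}} (c_{k,i}/c_{k,d_{k+1}}) b^{-d^m(i-d_{k+1})}|$, and the correction sum is bounded by a geometric series in $|b|^{-d^m}$ which tends to $0$ as $m\to\infty$ (here one needs that the series $\sum c_{k,i}z^{-i}$ from~\eqref{eq_qfp} converges at $z=b^{d^m}$, which holds for $m$ large since $b$ lies in the disc of convergence and $|b^{d^m}|\to\infty$). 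Hence the tail sum is $\asymp |b|^{-d^m d_{k+1}}$ for $m$ large, using crucially that $c_{k,d_{k+1}}\neq 0$. Combining with the estimate for $U(b)$ and absorbing $|b|^{-r_a/(d-1)}$ into the constant gives the second formula.

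The main obstacle is bookkeeping the uniformity of the implied constants in $m$: one must check that the infinite products $\prod_{t\geq 0}(1+O(|b|^{-d^t(\deg\text{gap})}))$ converge, so that truncating at $t=m-1$ stays within fixed multiplicative bounds, and that the ``$m$ large enough'' threshold (needed for the $o(1)$ and geometric-tail estimates to be genuinely small, say $<1/2$) can be chosen depending only on $A,B,k$ and not on $m$ itself. This is routine once one writes $|b|>1$ explicitly, but it is the only place where care is required; the algebraic manipulations are otherwise immediate from~\eqref{eq_defpqkm} and~\eqref{eq_pqkm}. I would also note at the start that, since the implied constants are allowed to depend on $k$, nothing needs to be tracked uniformly in $k$, which simplifies the argument considerably.
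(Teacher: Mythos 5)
Your proposal is correct and follows essentially the same route as the paper: the paper likewise normalizes each factor $A(b^{d^t})$, $B(b^{d^t})$ by its leading term and uses convergence of the resulting infinite product $\prod_t P_A(b^{-d^t})$ (nonvanishing by hypothesis) to get $m$-independent constants, and isolates the leading coefficient $c_{k,d_{k+1}}\neq 0$ of the tail series by letting $z=b^{d^m}\to\infty$ in $z^{d_{k+1}}(q_k(z)f(z)-p_k(z))$. The only cosmetic difference is that the paper phrases your geometric-tail estimate as a limit of the Laurent series at infinity, but the content is identical.
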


\begin{proof}
Since $b$ is inside the disc of convergence of $f$, it is also inside the
disc of convergence of
$$
z^{d_{k+1}} (q_k(z) f(z) - p_k(z)) = \sum_{i=0}^\infty c_{k,i+d_{k+1}}
z^{-i}.
$$
By letting $z$ to infinity we see that the right hand side tends to
$c_{k,d_{k+1}}\asymp 1$. This leads us to the following expression, as $m$ is
large enough,
\begin{equation}\label{eq10}
\left|\sum_{i=d_{k+1}}^\infty c_{k,i}b^{-d^m i}\right| \asymp |b|^{-d^m d_{k+1}}.
\end{equation}

Next, notice that
$$
\prod_{t=0}^{\infty} \frac{A(z^{d^t})}{\alpha z^{d^tr_a}} = \prod_{t=0}^{\infty} P_A(z^{-d^t}),
$$
where $P_A(z)$ is a polynomial with $P(0) = 1$. One can check that the disc
of convergence for this infinite product is $|z|>1$. Moreover, since
$A(b^{d^t})\neq 0$ for all $t\in \ZZ$, we have that the product
$$
\prod_{t=0}^{m-1} \frac{A(b^{d^t})}{\alpha b^{d^tr_a}}
$$
converges to a nonzero element as $m\to \infty$. This means that,
$$
\left|\prod_{t=0}^{m-1} \frac{A(b^{d^t})}{\alpha b^{d^tr_a}}\right| \asymp 1
$$
and
\begin{equation}\label{eq11}
\left|\prod_{t=0}^{m-1} A(b^{d^t})\right| \asymp |\alpha|^m |b|^{(1+d+\ldots +d^{m-1})r_a} \asymp |\alpha|^m |b|^{\frac{d^m}{d-1}r_a}.
\end{equation}
By analogous argumenat we get the same estimate for the product over
$B(b^{d^t})$.

The last ingredient of the proof is that for $m$ large enough,
$|q_k(b^{d^m})| \asymp |b|^{d^m d_k}$. Now,
\eqref{eq_pqkm},~\eqref{eq_defpqkm},~\eqref{eq10} and~\eqref{eq11} give us:
$$
|q_{k,m}(b)| = \left|\prod_{t=0}^{m-1} B(b^{d^t}) q_k(b^{d^m})\right| \asymp |\beta|^m |b|^{d^m\left(\frac{r_b}{d-1} + d_k\right)}
$$
and
$$
|q_{k,m}(b) f(b) - p_{k,m}(b)| = \left|\prod_{t=0}^{m-1} A(b^{d^t}) \sum_{i=d_{k+1}}^\infty c_{k,i}b^{-d^mi}\right|\asymp |\alpha|^m |b|^{d^m\left(\frac{r_a}{d-1} - d_{k+1}\right)}.
$$
\end{proof}

As an immediate corollary of~\eqref{lem4_eq} we have that
\begin{equation}\label{eq_qkm_o}
|q_{k,m}(b)| = b^{d^m\left(d_k + \frac{r_b}{d-1} + o(1)\right)}
\end{equation}
and
\begin{equation}\label{eq_approx_o}
|q_{k,m}(b) f(b) - p_{k,m}(b)| = b^{-d^m\left(d_{k+1} - \frac{r_a}{d-1}
+o(1)\right)}.
\end{equation}
Since the sequence $d_k$ tends to infinity with $k$, for any $\epsilon>0$ one
can choose $k=k(\epsilon)$ such that
$$
\epsilon d_k > \max\left\{\frac{r_a}{d-1}+1, \frac{r_b}{d-1}+1\right\}.
$$
For that $k$ we can choose $m$ big enough ($m>m_0(k)$) so that the absolute
values of $o(1)$ in~\eqref{eq_qkm_o} and~\eqref{eq_approx_o} are smaller than
1/2. Then we have
\begin{equation}\label{eq_approx_eps}
q_{k,m}(b)^{-\frac{d_{k+1} (1+\epsilon)}{d_k (1-\epsilon)}} < |q_{k,m}(b) f(b) - p_{k,m}(b)| < q_{k,m}(b)^{-\frac{d_{k+1} (1-\epsilon)}{d_k (1+\epsilon)}}.
\end{equation}
Since, by letting $k\to\infty$, we can make $\epsilon$ as small as possible,
we immediately have $\mu(f(b)) \ge 1+ \limsup \frac{d_{k+1}}{d_k}$. For
convenience, let us denote that ratio $d_{k+1}/d_k$ by $\delta_k$ and define
$$
\rho:=\limsup \frac{d_{k+1}}{d_k}.
$$

\subsection{Upper bound for $\mu(f(b))$}


For a given $k_0\in\NN$ define $K = K(k_0)\in\NN$ as the minimal possible
value such that
\begin{equation}\label{ineq_bigk}
d_{K+1}> d\cdot d_{k_0+1}-r_a+d+1.
\end{equation}
Consider any $k$ in the range $k_0\le k<K$ and consider an arbitrary $m>
M=M(k_0):=\max\{m_0(k_0), m_0(k_0+1), \ldots, m_0(K)\}$, so that the
equation~\eqref{eq_approx_eps} is satisfied for all values $k$ between
$k_0$ and $K$. Equation~\eqref{eq_approx_eps} yields to
\begin{equation}\label{eq_epskm}
\left|f(b) -\frac{p_{k,m}(b)}{q_{k,m}(b)}\right| \asymp q_{k,m}^{ -1 - \left(\frac{d_{k+1}}{d_k} +\epsilon_{k,m}\right)},
\end{equation}
where $\sup_{k_0\le k<K,\; m\ge M} |\epsilon_{k,m}|$ tends to zero as $k_0$
tends to infinity.

Now we construct sequences $P_n/Q_n$ and $P'_n/Q'_n$ for
Proposition~\ref{prop1} in the following way:
$$
\frac{P_1}{Q_1} := \frac{p_{k_0,M}(b)}{q_{k_0,M}(b)},\; \frac{P_2}{Q_2} := \frac{p_{k_0+1,M}(b)}{q_{k_0+1,M}(b)},\ldots,\; \frac{P_{K-k_0}}{Q_{K-k_0}} := \frac{p_{K-1,M}(b)}{q_{K-1,M}(b)};
$$
$$
\frac{P'_1}{Q'_1} := \frac{p_{k_0+1,M}(b)}{q_{k_0+1,M}(b)},\; \frac{P'_2}{Q'_2} := \frac{p_{k_0+2,M}(b)}{q_{k_0+2,M}(b)},\ldots,\; \frac{P'_{K-k_0}}{Q'_{K-k_0}} := \frac{p_{K,M}(b)}{q_{K,M}(b)}.
$$
Then we continue defining the sequences by increasing the index $M$. That is,
for any $u\in\ZZ_{\ge0}$ and any $v\in \{1,\ldots, K-k_0\}$ we define
$$
\frac{P_{u(K-k_0)+v}}{Q_{u(K-k_0)+v}} := \frac{p_{k_0+v-1,M+u}(b)}{q_{k_0+v-1,M+u}(b)};\quad  \frac{P'_{u(K-k_0)+v}}{Q'_{u(K-k_0)+v}} := \frac{p_{k_0+v,M+u}(b)}{q_{k_0+v,M+u}(b)}\ .
$$
From~\eqref{eq_epskm} one can see that the following sequences
$(\delta_n)_{n\in\NN}$ and $(\tau_n)_{n\in\NN}$ satisfy Condition~(b) of
Proposition~\ref{prop1}:
$$
\delta_{u(K-k_0)+v}:= \frac{d_{k_0+v}}{d_{k_0+v-1}} +\epsilon_{k_0+v-1, M+u};\quad \tau_{u(K-k_0)+v}:= \frac{d_{k_0+v+1}}{d_{k_0+v}} +\epsilon_{k_0+v, M+u}.
$$

Let us define a sequence $(\theta_n)_{n\in\NN}$ so that Condition~(a) is
satisfied. By~\eqref{eq_qkm_o} we have that for any $k\in\{k_0,\ldots, K\}$
and for any $m>M$,
$$
|q_{k+1,m} (b)| = |q_{k,m}(b)|^{\frac{d_{k+1}+r_b/(d-1)+o(1)}{d_k+r_b/(d-1)+o(1)}} = |q_{k,m}(b)| ^{\frac{d_{k+1}}{d_k}+\epsilon^*_{k,m}},
$$
where, as for $\epsilon_{k,m}$, $\sup_{k_0\le k<K,\; m\ge M}
|\epsilon^*_{k,m}|\to 0$ as $k_0$ tends to infinity. The last equation
suggests the following formula for $\theta_n$:
$$
\theta_{u(K-k_0)+v} := \frac{d_{k_0+v}}{d_{k_0+v-1}} +\epsilon^*_{k_0+v-1, M+u}.
$$

It remains to verify Condition~(c). The limit $Q_n^{\delta_n}\to\infty$ is
obvious. Because of Condition~(b), the equation $(Q'_n)^{\tau_n} \gg
Q_{n+1}^{\delta_{n+1}}$ is equivalent to:
\begin{equation}\label{cond_qqdash}
|Q'_n f(b) - P'_n| \ll |Q_{n+1} f(b) - P_{n+1}|.
\end{equation}
By definition, we have that for any $u\in \ZZ_{\ge 0}$ and $v\in \{1,\ldots
K-k_0-1\}$, $Q'_{u(K-k_0)+v} = Q_{u(K-k_0)+v+1}$ and $P'_{u(K-k_0)+v} =
P_{u(K-k_0)+v+1}$ and both sides of~\eqref{cond_qqdash} coincide for
$n=u(K-k_0)+v$. Therefore it only remains to verify~\eqref{cond_qqdash} for
$n=(u+1)(K-k_0)$. From the estimate~\eqref{eq_approx_o} and equations
$Q'_{(u+1)(K-k_0)} = q_{K,M+u}(b)$, $Q_{(u+1)(K-k_0)+1} = q_{k_0, M+u+1}(b)$
we have
$$
\begin{aligned}
|q_{K,M+u}(b)f(b) - p_{K,M+u}(b)| &=
b^{-d^{m+u}\left(d_{K+1}-\frac{r_a}{d-1}+o(1)\right)} \\
\stackrel{\eqref{ineq_bigk}}<b^{-d^{m+u+1}\left(d_{k_0+1}-\frac{r_a}{d-1}+o(1)\right)} &= |q_{k_0,M+u+1}(b)f(b) -
p_{k_0,M+u+1}(b)|.
\end{aligned}
$$

After all conditions of Proposition~\ref{prop1} are checked, we apply it to
get
$$
\mu(f(b)) \le \limsup_{u\to\infty} \max_{k_0\le v<K} \left\{1+ \frac{\theta_{u(K-k_0)+v}}{\delta_{u(K-k_0)+v}}, (1+\tau_{u(K-k_0)+v})\frac{\theta_{u(K-k_0)+v}}{\delta_{u(K-k_0)+v}}\right\}.
$$
Notice that by construction, $\theta_n/\delta_n$ tends to one as $k_0$ tends
to infinity. Also, as $k_0$ tends to infinity, we have that
$$
\tau_{u(K-k_0)+v} \to \frac{d_{k_0+v+1}}{d_{k_0+v}}.
$$
for all $u\ge 0$ and $v$ between $k_0$ and $K$. This leads to the upper bound
$$
\mu(f(b))\le \limsup_{n\to\infty} \left\{1 + \frac{d_{n+1}}{d_n}\right\},
$$
which now coincides with the lower bound for $\mu(f(b))$. That proves
Theorem~\ref{th1}. \endproof

\section{Gaps in the set of values $d_k$}

Theorem~\ref{th1} suggests that in order to compute the irrationality
exponent of a Mahler number $f(b)$, we need to consider large gaps in the
sequence $(d_k)_{k\in \NN}$ of degrees of the denominators of the convergents
of $f(z)$.

Define by $\Phi$ the set of all values $d_k$:
$$
\Phi = \Phi(f) := \{d_k\;:\; k\in \NN\}.
$$
We say that $[u,v]$ is {\it a gap in $\Phi$ of size $r>0$} if $u$ and $v$ are
elements of $\Phi$, $r = v-u$ and no elements $w$ with $u<w<v$ are in $\Phi$.
For the gap $[u,v]$ in $\Phi$ we say that $p_u(z)/q_u(z)$ is {\it gap's
convergent} if $p_u(z)/q_u(z)$ is a convergent of $f$ and $\deg(q_u) = u$.

In further discussion we always assume that the value $b\in \NN$ satisfies
the conditions of Theorem~\ref{th1}. It implies that if all gaps in $\Phi$
are of size at most $\frac{r_a+r_b}{d-1}$ then $\mu(f(b))=2$. Indeed, we have
$$
\mu(f(b)) = 1+ \limsup_{\mbox{\scriptsize gaps }[u,v]\mbox{\scriptsize \;of }\Phi\;} \frac{v}{u}\le \lim_{u\to\infty}1+\frac{u + \frac{r_a+r_b}{d-1}}{u} = 2.
$$
Therefore in order to compute the irrationality exponent of $f(b)$ it is
sufficient to consider gaps in $\Phi$ of a bigger size than
$\frac{r_a+r_b}{d-1}$. We call such gaps big. We introduce a partial order on
the set of big gaps. We say that $[u,v]\prec [u',v']$ if there exists
$m\in\NN$ such that
$$
\frac{p_{u'}(z)}{q_{u'}(z)} = \prod_{t=0}^{m-1} \frac{A(z^{d^t})}{B(z^{d^t})} \cdot \frac{p_u(z^{d^m})}{q_u(z^{d^m})}.
$$
This definition is justified by the following lemma.
\begin{lemma}
Let $[u,v]$ be a big gap in $\Phi$. Then the fraction
$$
\frac{A(z)p_u(z^d)}{B(z)q_u(z^d)}
$$
is a convergent of $f$. Moreover, the gap in $\Phi$, which corresponds to
this convergent, has size bigger than $v-u$.
\end{lemma}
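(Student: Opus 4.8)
The plan is to start from the identity~\eqref{eq_pqk1} with $k=u$, which reads
$$
B(z)q_u(z^d) f(z) - A(z)p_u(z^d) = A(z) \sum_{i=d_{u+1}}^\infty c_{u,i}z^{-di}.
$$
Since $[u,v]$ is a big gap, $d_{u+1}=v$, so the right-hand side is a power series starting at $z^{-dv+r_a}$. Thus the rational function $\frac{A(z)p_u(z^d)}{B(z)q_u(z^d)}$ approximates $f(z)$ to order $dv-r_a$ in $z^{-1}$. Writing the reduced form of this fraction as $\frac{p(z)}{q(z)}$ with $\deg q = d\cdot u + r_b - \lambda$, where $\lambda\ge 0$ accounts for cancellation between numerator and denominator, I would invoke the standard characterization of convergents of a Laurent series: a rational $p/q$ in lowest terms is a convergent of $f$ precisely when $|f-p/q|<|q|^{-2}$ in the $z^{-1}$-adic valuation, i.e. the approximation order exceeds $2\deg q$. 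Here the approximation order is at least $dv-r_a$ and $2\deg q \le 2(du+r_b)$, so the condition $dv-r_a > 2du+2r_b$, i.e. $d(v-u)>du + r_a+2r_b$... this is not automatic, so I would instead argue more carefully: the fraction $\frac{p_u(z^d)}{q_u(z^d)}$ is itself a convergent of $f(z^d)$, hence $|f(z^d) - p_u(z^d)/q_u(z^d)| \asymp |q_u(z^d)|^{-1}|b|^{-d(v-u)}$-type statement at the level of valuations, and multiplying through by $A/B$ preserves the "convergent" property up to the bookkeeping of $\deg A, \deg B$.

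Concretely, the cleanest route is: $p_u(z^d)/q_u(z^d)$ is a convergent of $f(z^d)=B(z)f(z)/A(z)$ in the sense that it is the best rational approximation at its denominator degree; then $\frac{A(z)p_u(z^d)}{B(z)q_u(z^d)}$, after reduction, is a good rational approximation to $f(z)$ whose denominator degree is at most $du+r_b$ and whose approximation order (valuation of $q_uf - p_u$ after clearing) is $dv - r_a$ minus possible cancellation. I would then show $dv-r_a > 2(du+r_b) - (\text{cancellation})$ using that the gap is big, i.e. $v - u > \frac{r_a+r_b}{d-1}$, which rearranges to $dv - du > v - u + r_a + r_b$, hence $dv > du + (v-u) + r_a + r_b \ge du + r_a + r_b$; combined with $v>u$ this should push the approximation order past twice the denominator degree once the cancellation $\lambda$ is tracked (cancellation only helps, since it lowers $\deg q$). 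That establishes that the reduced fraction is a genuine convergent $p_w/q_w$ of $f$ with $w = \deg q_w \le du + r_b$.

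For the second assertion — that the new gap $[w, w']$ with $w' = d_{w+1}$ has size $w' - w > v - u$ — I would again use~\eqref{eq_pqk1}: the valuation of $q_w f - p_w$ equals the valuation of $\frac{1}{\gcd}(B q_u(z^d) f - A p_u(z^d))$, which is $dv - r_a$ minus the degree of the cancelled factor, while $w = du + r_b$ minus that same cancelled-factor degree contribution (on the denominator side). Since $d_{w+1}$ is by definition the valuation of $q_wf - p_w$, we get $d_{w+1} = dv - r_a - (\text{something}) $ and $w \le du + r_b$, so
$$
d_{w+1} - w \ge (dv - r_a) - (du + r_b) = d(v-u) - (r_a + r_b) > (v-u)
$$
where the last inequality is exactly the big-gap hypothesis $v-u > \frac{r_a+r_b}{d-1}$. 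I would need to check the cancellation bookkeeping is consistent between the numerator and denominator sides (the same polynomial factor divides both $Bq_u(z^d)$ and $Ap_u(z^d)$, so it drops out of the difference and out of $\deg q_w$ in a compatible way), but this is routine once set up.

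The main obstacle I expect is the cancellation/gcd bookkeeping: $A(z)$ and $B(z)$ need not be coprime with $q_u(z^d)$ and $p_u(z^d)$ respectively, so I must argue that whatever common factor gets cancelled in forming $p_w/q_w$ from $\frac{A p_u(z^d)}{B q_u(z^d)}$ is accounted for identically on both sides of~\eqref{eq_pqk1}, and hence reduces $d_{w+1}$ and $\deg q_w$ by the same amount — leaving the difference $d_{w+1} - w$, and thus the gap size, unchanged or larger. A secondary point is making rigorous the valuation-theoretic characterization of convergents (that a reduced rational $p/q$ with $v_\infty(qf-p) > 2\deg q$ is automatically a convergent), which is classical for continued fractions over $\QQ[z]$ but should be stated as a lemma for completeness.
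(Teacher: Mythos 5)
Your approach is the same as the paper's: reduce $\frac{A(z)p_u(z^d)}{B(z)q_u(z^d)}$ by its gcd $C(z)$ of degree $r_c$, read off the order of $B(z)q_u(z^d)f(z)-A(z)p_u(z^d)$ from~\eqref{eq_pqk1}, invoke the best-approximation criterion for convergents of Laurent series, and compute the new gap as $d(v-u)+2r_c-r_a-r_b>v-u$. The argument does close, but there is one persistent confusion you must fix before writing it up: you keep comparing the order of vanishing of $qf-p$ (which is $dv-r_a$, up to the gcd correction) against $2\deg q$, whereas in that normalization the criterion is that the order of $qf-p$ exceed $\deg q$ (equivalently, that the order of $f-p/q$ exceed $2\deg q$; the two statements differ by exactly one factor of $\deg q$, which is where your bookkeeping slips). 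Your stated target $dv-r_a>2(du+r_b)$ is false in general --- it cannot follow from the big-gap hypothesis, since it involves $u$ itself and not just $v-u$ --- and indeed your rearrangement of $v-u>\frac{r_a+r_b}{d-1}$ only yields $dv-r_a>du+r_b$. That, however, is precisely the correct criterion (the paper's inequality $r_b+du-r_c<dv+r_c-r_a$), so the proof goes through once the criterion is stated correctly. A second, minor point: cancelling $C(z)$ \emph{raises} the order of $q_wf-p_w$ by $r_c$ (since $\|g/C\|=\|g\|-\deg C$) while lowering $\deg q_w$ by $r_c$, so the gap grows by $2r_c$ rather than staying fixed or shrinking; your worst-case bound $d_{w+1}-w\ge(dv-r_a)-(du+r_b)=d(v-u)-(r_a+r_b)>v-u$ is therefore valid a fortiori.
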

\begin{proof}
Denote by $C(z)$ the polynomial $\gcd(A(z)p_u(z^d), B(z)q_u(z^d))$ and let
$r_c:= \deg(C)$. From~\eqref{eq_pqk1} we have that
\begin{equation}\label{eq_approx_pqu}
\left|\left|\frac{B(z) q_u(z^d)}{C(z)} f(z) - \frac{A(z) p_u(z^d)}{C(z)}\right|\right| = r_a - r_c - dv.
\end{equation}
Here, $||g||$ denotes the biggest degree of $z$ with non-zero coefficient in
Laurent series $g$. We have that $\frac{B(z) q_u(z^d)}{C(z)}$ and $\frac{A(z)
p_u(z^d)}{C(z)}$ are coprime and moreover,
\begin{equation}\label{eq_qnp1}
\deg\left( \frac{B(z) q_u(z^d)}{C(z)} \right) = r_b + du-r_c < dv + r_c -r_a.
\end{equation}
The last inequality is true because for big gaps we have $v-u>
\frac{r_a+r_b}{d-1}$. Hence
$$
\frac{A(z) p_u(z^d)}{C(z)} \big/ \frac{B(z) q_u(z^d)}{C(z)}
$$
is a convergent of $f$ and the size of its corresponding gap is
$$
(dv+r_c-r_a) - (r_b+du-r_c) = 2r_c + d(v-u) - r_a - r_b > v-u.
$$
\end{proof}

We say that a big gap $[u,v]$ in $\Phi$ is {\it primitive} if there are no
other big gaps $[u',v']$ in $\Phi$ such that $[u',v']\prec [u,v]$. A
primitive gap $[u,v]$ generates the ordered sequence of big gaps
$$
[u,v] = [u_0,v_0] \prec [u_1,v_1]\prec [u_2,v_2]\prec\cdots
$$
such that
\begin{equation}\label{eq21}
\frac{p_{u_{n+1}}(z)}{q_{u_{n+1}}(z)} = \frac{A(z) p_{u_n}(z^d)}{C(z)} \big/ \frac{B(z) q_{u_n}(z^d)}{C(z)}.
\end{equation}
Then the formula~\eqref{th1_eq} for $\mu(f(b))$ from Theorem~\ref{th1} can be
rewritten as follows:
\begin{equation}\label{eq_mufb_prim}
\mu(f(b)) = 1+\sup_{[u_0,v_0]\mbox{\scriptsize\ is primitive}} \left\{ \limsup_{i\to\infty}\frac{v_{n_i}}{u_{n_i}}\right\} \cup\{1\}.
\end{equation}

\begin{lemma}\label{lem6}
The size of a primitive gap in $\Phi$ does not exceed
$\frac{2d-1}{d-1}(r_a+r_b)$.
\end{lemma}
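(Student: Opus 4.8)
Let $[u,v]$ be a primitive big gap with gap's convergent $p_u/q_u$. The key tool is the transformation studied in the previous lemma: starting from $p_u/q_u$ we form
$$
\frac{P(z)}{Q(z)} := \frac{A(z)p_u(z^d)}{C(z)}\Big/\frac{B(z)q_u(z^d)}{C(z)},
$$
where $C(z)=\gcd(A(z)p_u(z^d),B(z)q_u(z^d))$ and $r_c=\deg C$. By that lemma this is again a convergent of $f$, and it produces the next big gap $[u_1,v_1]$ in the $\prec$-chain, with
$$
u_1 = r_b+du-r_c,\qquad v_1 = dv+r_c-r_a,\qquad v_1-u_1 = 2r_c + d(v-u)-r_a-r_b.
$$
My first observation is that primitivity of $[u,v]$ should force $[u,v]$ to lie \emph{below} this construction applied to some smaller gap, unless the gap is "born small"; more precisely I would argue that the predecessor relation $\prec$ is generated by exactly this $d$-fold step, so a primitive gap is one that is not the image of any big gap under it. The task is then to bound $v-u$ for such a gap.

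\textbf{Key steps.} First I would record the exact degree bookkeeping for a single $\prec$-step: if $[u',v']$ is any big gap of size $r'=v'-u'$, then its successor has size $d r' + 2r_c - r_a - r_b$ where $r_c\ge 0$. In particular the size is strictly increasing along a chain, and the successor's size is at least $dr' - r_a - r_b$. Conversely — and this is the crucial direction — if $[u,v]$ is a big gap that \emph{does} arise as a successor of some big gap $[u',v']$, then from $u = r_b + du' - r_c$ and $v = dv' + r_c - r_a$ we can solve back: $du' = u - r_b + r_c$ and $dv' = v + r_a - r_c$, so $d(v'-u') = (v-u) + (r_a+r_b) - 2r_c$. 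For $[u',v']$ to be a genuine big gap we need $v'-u' > \frac{r_a+r_b}{d-1}$, i.e. $(v-u)+(r_a+r_b)-2r_c > \frac{d(r_a+r_b)}{d-1}$, which rearranges to $v-u > \frac{r_a+r_b}{d-1} + 2r_c$. So: \emph{whenever the gap is large enough (bigger than $\frac{r_a+r_b}{d-1}+2r_c$ for the appropriate $r_c$), it must be a successor and hence not primitive.} The contrapositive gives that a primitive gap satisfies $v-u \le \frac{r_a+r_b}{d-1} + 2r_c$.

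\textbf{Finishing and the obstacle.} It remains to bound $r_c$ for a primitive gap. Here $C(z) = \gcd(A(z)p_u(z^d),B(z)q_u(z^d))$, and since $p_u,q_u$ are coprime and $A,B$ have degrees $r_a,r_b$, every irreducible factor of $C$ divides either $A(z)$ or divides both $p_u(z^d)$ and $B(z)$ (or the mixed variants); in all cases $r_c \le r_a + r_b$ by a straightforward gcd/degree argument using $\gcd(p_u(z^d),q_u(z^d))=1$. Plugging $r_c \le r_a+r_b$ into $v-u \le \frac{r_a+r_b}{d-1} + 2r_c$ gives
$$
v-u \;\le\; \frac{r_a+r_b}{d-1} + 2(r_a+r_b) \;=\; \frac{2d-1}{d-1}(r_a+r_b),
$$
which is exactly the claimed bound. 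The main obstacle I anticipate is the converse direction made rigorous: I must make sure that the "predecessor" $[u',v']$ reconstructed from the divisibility identities is genuinely a gap of $\Phi$ (i.e. that $p_{u'}/q_{u'}$ really is a convergent of $f$ with denominator degree exactly $u'$) and not merely a formal solution of the degree equations — this is where one needs that the only way to produce the convergent $p_u/q_u$ is via the $d$-fold substitution, which in turn rests on the structure of~\eqref{eq_pqk1} and the uniqueness of convergents. The other point requiring care is the edge case $r_c$ small or the gap's convergent itself being the image of a non-gap (a convergent whose degree jump is small), which must be excluded by the big-gap hypothesis $v-u>\frac{r_a+r_b}{d-1}$.
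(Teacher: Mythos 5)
Your forward computation is correct but it is not where the difficulty of this lemma lies, and the step you yourself flag as ``the main obstacle'' is in fact the entire content of the proof — and it is missing. What you have shown is: \emph{if} $[u,v]$ arises as the successor of a big gap $[u',v']$, then $v-u>\frac{r_a+r_b}{d-1}+2r_c$. To conclude that a large primitive gap cannot exist you need the converse: every big gap with $v-u>\frac{2d-1}{d-1}(r_a+r_b)$ \emph{is} the successor of some big gap. Nothing in your argument produces a candidate predecessor; you only solve the degree equations backwards, which presupposes that $p_u/q_u$ lies in the image of the substitution map $p/q\mapsto A(z)p(z^d)/\bigl(B(z)q(z^d)\bigr)$ applied to some convergent — precisely what must be proved. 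There is also a circularity in your contrapositive: the quantity $r_c$ in the bound $v-u\le\frac{r_a+r_b}{d-1}+2r_c$ is defined in terms of the predecessor whose existence is being denied, so for a genuinely primitive gap it is not even defined.

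The paper closes exactly this hole. It sets $w=\lfloor\frac{v-r_b-1}{d}\rfloor$ and distinguishes two cases. If $w$ lies inside a big gap $[s,t]$, the degree identities show that the successor gap of $[s,t]$ contains $[ds+r_b,\,dt-r_a]$, and the assumed size of $[u,v]$ forces this gap to intersect, hence coincide with, $[u,v]$ — contradicting primitivity. If $w$ does not lie inside a big gap, there is $s\in\Phi$ with $0\le w-s\le\frac{r_a+r_b}{d-1}$, and a best-approximation argument (divide $q(z)=B(z)q_s(z^d)$ by $q_u$ with remainder and compare orders of $qf-p$) shows that the reduced form of $A(z)p_s(z^d)/\bigl(B(z)q_s(z^d)\bigr)$ \emph{is} $p_u/q_u$; then $v-u=d(t-s)+2r_c-r_a-r_b$ with $[s,t]$ a \emph{small} gap, which yields the stated bound. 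So your plan identifies the right degree bookkeeping and the right final inequality, but the existence of the predecessor convergent — established in the paper via the choice of $w$ and the division-with-remainder argument — must be supplied before the proof is complete.
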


\begin{proof}
Suppose the contrary: the size of a primitive gap $[u,v]$ in $\Phi$ is bigger
than $\frac{2d-1}{d-1}(r_a+r_b)$. Let $w$ be the biggest integer such that
$dw<v-r_b$ (i.e. $w = \lfloor\frac{v-r_b-1}{d}\rfloor$).

Assume that $w$ lies inside a big gap $[s,t]$ in $\Phi$, that is, $s\le w<t$.
Then, by~\eqref{eq_approx_pqu} and~\eqref{eq_qnp1} the gap, associated with
the convergent
$$
\frac{A(z)p_s(z^d)}{B(z)q_s(z^d)},
$$
contains $[ds+r_b, dt-r_a]$. Obviously, $ds+r_b<v$ and $dt-r_a\ge v-r_b-r_a
>u$. Therefore this gap intersects with $[u,v]$ and hence it must coincide with $[u,v]$.
We get $[s,t]\prec[u,v]$, which is a contradiction.

We then deduct that $w$ does not lie inside a big gap. In other words, there
is an element $s\in\Phi$ with $0\le w-s\le \frac{r_a+r_b}{d-1}$. Consider the
fraction
$$
\frac{p(z)}{q(z)} = \frac{A(z)p_s(z^d)}{B(z)q_s(z^d)}.
$$
Then by~\eqref{eq_pqk1}, we have $\|q(z) f(z) - p(z)\| \le r_a - d(s+1)$,
which is strictly smaller than $-u$. Indeed,
$$
d(s+1)-r_a\ge d\left( w- \frac{r_a+r_b}{d-1} +1\right) -r_a > v-r_a-r_b -\frac{d}{d-1}(r_a+r_b)\ge u.
$$

Divide $q$ by $q_u$ with the remainder: $q(z) = a(z) q_u(z) + r(z)$ and write
$p(z) = a(z) p_u(z) + c(z)$. Then we have
$$
||a(z) q_u(z) f(z) - a(z) p_u(z) || = \deg(a) -v.
$$
Obviously, the degree of $q$ is $r_b+ds$ which is strictly smaller than $v$
and therefore $\deg(a) - v < v-u - v = -u$.

Assume that $r\neq 0$. Since the convergents of $f$ are the best approximants
to $f$ and $\deg(r)<\deg(q_u)$, we have
$$
||r(z)f(z) - c(z)|| \ge ||q_{u'} (z) f(z) - p_{u'}(z)|| = -u,
$$
where $p_{u'}/q_{u'}$ is the convergent of $f$ which precedes $p_u/q_u$. The
last two estimates imply
$$
||q(z) f(z) - p(z) || = ||r(z)f(z) - c(z)|| \ge -u,
$$
which contradicts the condition $r\neq 0$. Hence we get that $r = 0$ and
$p/q$ coincides with $p_u/q_u$. This together with~\eqref{eq_approx_pqu}
and~\eqref{eq_qnp1} implies that $u = ds+r_b-r_c$ and $v = dt+r_c-r_a$ where
$r_c = \deg(\gcd(A(z)p_u(z^d), B(z)q_u(z^d)))$. Since polynomials $p_u(z^d)$
and $q_u(z^d)$ are coprime, $r_c\le r_a+r_b$. Finally,
$$
v-u \le d(t-s)+2r_c - r_a-r_b \le \frac{2d-1}{d-1} (r_a+r_b).
$$

\end{proof}

\begin{lemma}\label{lem7}
Assume that $(d-1)u>r_a$. Then
\begin{equation}\label{lem7_ineq}
\frac{v - \frac{r_a}{d-1}}{u + \frac{r_b}{d-1}}\le \limsup_{n\to\infty} \left\{\frac{v_n}{u_n}\right\} \le \frac{v + \frac{r_b}{d-1}}{u - \frac{r_a}{d-1}}.
\end{equation}
\end{lemma}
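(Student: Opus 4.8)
The plan is to turn the recursive description~\eqref{eq21} of the chain $[u,v]=[u_0,v_0]\prec[u_1,v_1]\prec\cdots$ into an explicit two–sided control of $u_n$ and $v_n$, and then read off the bounds for $\limsup v_n/u_n$. First I would record the exact recursions. Writing $c_n:=\deg\gcd\bigl(A(z)p_{u_n}(z^d),\,B(z)q_{u_n}(z^d)\bigr)$, equations~\eqref{eq_approx_pqu},~\eqref{eq_qnp1} and~\eqref{eq21} give
\[
u_{n+1}=d\,u_n+r_b-c_n,\qquad v_{n+1}=d\,v_n-r_a+c_n .
\]
Exactly as observed in the proof of Lemma~\ref{lem6}, coprimality of $p_{u_n}(z^d)$ and $q_{u_n}(z^d)$ forces $0\le c_n\le r_a+r_b$, so that $-r_a\le r_b-c_n\le r_b$ and $-r_a\le -r_a+c_n\le r_b$. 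Consequently $u_n$ and $v_n$ are each trapped between the two ``extremal'' linear recursions $x_{n+1}=d\,x_n-r_a$ and $x_{n+1}=d\,x_n+r_b$.

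Next I would solve those extremal recursions: with initial value $a$ they are $d^n\!\left(a-\tfrac{r_a}{d-1}\right)+\tfrac{r_a}{d-1}$ and $d^n\!\left(a+\tfrac{r_b}{d-1}\right)-\tfrac{r_b}{d-1}$ respectively. A one–line induction on $n$, using $u_{n+1}\ge d\,u_n-r_a$ and $u_{n+1}\le d\,u_n+r_b$ (and similarly for $v$), then yields
\[
d^n\!\left(u-\tfrac{r_a}{d-1}\right)+\tfrac{r_a}{d-1}\ \le\ u_n\ \le\ d^n\!\left(u+\tfrac{r_b}{d-1}\right)-\tfrac{r_b}{d-1},
\]
\[
d^n\!\left(v-\tfrac{r_a}{d-1}\right)+\tfrac{r_a}{d-1}\ \le\ v_n\ \le\ d^n\!\left(v+\tfrac{r_b}{d-1}\right)-\tfrac{r_b}{d-1}.
\]
Here the hypothesis $(d-1)u>r_a$ enters: it guarantees $u-\tfrac{r_a}{d-1}>0$, and since $v>u$ also $v-\tfrac{r_a}{d-1}>0$, so all four bounding sequences are eventually positive. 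I would also note, for use elsewhere, that this same computation shows $(d-1)u_n\ge d^n\bigl((d-1)u-r_a\bigr)+r_a>r_a$, i.e. the hypothesis propagates along the chain.

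Finally, to bound $v_n/u_n$ from above I pair the upper bound for $v_n$ with the lower bound for $u_n$ (legitimate once both are positive) and let $n\to\infty$:
\[
\frac{v_n}{u_n}\le\frac{d^n\!\left(v+\tfrac{r_b}{d-1}\right)-\tfrac{r_b}{d-1}}{d^n\!\left(u-\tfrac{r_a}{d-1}\right)+\tfrac{r_a}{d-1}}\xrightarrow[n\to\infty]{}\frac{v+\tfrac{r_b}{d-1}}{u-\tfrac{r_a}{d-1}},
\]
so $\limsup_n v_n/u_n\le\bigl(v+\tfrac{r_b}{d-1}\bigr)\big/\bigl(u-\tfrac{r_a}{d-1}\bigr)$; pairing the lower bound for $v_n$ with the upper bound for $u_n$ gives in the same way $\liminf_n v_n/u_n\ge\bigl(v-\tfrac{r_a}{d-1}\bigr)\big/\bigl(u+\tfrac{r_b}{d-1}\bigr)$, hence the same lower bound for $\limsup$. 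This is exactly~\eqref{lem7_ineq}. I do not expect a genuine obstacle here; the only points requiring care are the sign/direction of the inequality $0\le c_n\le r_a+r_b$ (it must be used as $r_b-c_n\ge -r_a$, etc.) and the verification that the numerators and denominators of the comparison ratios are eventually positive, which is precisely what the assumption $(d-1)u>r_a$, combined with $v>u$, provides.
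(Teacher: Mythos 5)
Your proposal is correct and follows essentially the same route as the paper: both start from the recursions $u_{n+1}=du_n+r_b-r_{c,n}$, $v_{n+1}=dv_n-r_a+r_{c,n}$ with $0\le r_{c,n}\le r_a+r_b$, iterate to get two-sided bounds (your closed forms $d^n\bigl(u\pm\cdot\bigr)\mp\cdot$ are algebraically identical to the paper's $d^nu\pm(1+d+\cdots+d^{n-1})r_{a,b}$), and pass to the limit. Your explicit remarks on positivity of the bounding sequences and on the role of $(d-1)u>r_a$ are a minor tidying of the same argument, not a different approach.
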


\begin{proof}
From equations~\eqref{eq_approx_pqu} and~\eqref{eq_qnp1} we have that
\begin{equation}\label{form_un}
\begin{array}{l}
u_{n+1} = du_n+r_b - r_{c,n};\\
v_{n+1} = dv_n-r_a+r_{c,n},
\end{array}
\end{equation}
where
\begin{equation}\label{def_rcn}
r_{c,n} = \deg\gcd(A(z) p_{u_n}(z^d), B(z) q_{u_n}(z^d))
\end{equation}
and it is not bigger than $r_a+r_b$. This implies
$$
\frac{dv_n-r_a}{du_n+r_b}\le\frac{v_{n+1}}{u_{n+1}}\le \frac{dv_n+r_b}{du_n-r_a}.
$$
By iterating this inequality $n$ times we get
$$
\frac{d^nv-(1+d+\ldots +d^{n-1})r_a}{d^nu+(1+d+\ldots +d^{n-1})r_b} \le \frac{v_n}{u_n}\le \frac{d^nv+(1+d+\ldots +d^{n-1})r_b}{d^nu-(1+d+\ldots +d^{n-1})r_a}.
$$
Taking limits as $n\to\infty$ yields~\eqref{lem7_ineq}.
\end{proof}

Lemmata~\ref{lem6} and~\ref{lem7} together imply that only finitely many
primitive gaps may contribute to the supremum in~\eqref{eq_mufb_prim}.
Indeed, consider all primitive gaps $[u,v]$ in $\Phi$ with
$u>\frac{r_a}{d-1}$. By Lemma~\ref{lem6}, their sizes are bounded. Therefore
we can choose the primitive gap $[u_0,v_0]$ in $\Phi$ with the biggest
possible size $S$ such that $u_0$ is smallest possible among all primitive
gaps in $\Phi$ of this size. Then, by Lemma~\ref{lem7}, a primitive gap
$[u,v]$ in $\Phi$ can only contribute to the limsup in~\eqref{eq_mufb_prim}
if
$$
\frac{u+S+\frac{r_b}{d-1}}{u-\frac{r_a}{d-1}} > \frac{v_0-\frac{r_a}{d-1}}{u_0+\frac{r_b}{d-1}}.
$$
Since $v_0-u_0>\frac{r_a+r_b}{d-1}$, the right hand side of the inequality is
bigger than one and therefore it gives us an upper bound for $u$. Denote this
bound by $l_u$. We deduct that only $[u,v]$ with $u\le l_u$ can contribute to
the limsup in~\eqref{eq_mufb_prim} and there are obviously finitely many of
them.

Now to complete the proof of Theorem~\ref{th2}, we need to show that for any
primitive gap $[u,v]$ in $\Phi$ we have that $\liminf_{n\to\infty} v_n/u_n$
is a rational number. The most misterious term in the
formulae~\eqref{form_un} is $r_{c,n}$. In the next section we prove the
following proposition which is a key to the proof of Theorem~\ref{th2}.

\begin{proposition}\label{prop2}
The sequence $(r_{c,n})_{n\in\NN}$ is eventually periodic.
\end{proposition}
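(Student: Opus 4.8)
The plan is to understand the recursion
$$
\frac{p_{u_{n+1}}(z)}{q_{u_{n+1}}(z)} = \frac{A(z)p_{u_n}(z^d)}{B(z)q_{u_n}(z^d)}
$$
at the level of the \emph{reduced} fractions, and to show that the ``new'' common factor $r_{c,n} = \deg\gcd(A(z)p_{u_n}(z^d),\, B(z)q_{u_n}(z^d))$ only depends on a bounded amount of data that evolves in a finitely-determined way. First I would observe that since $p_{u_n}$ and $q_{u_n}$ are coprime, so are $p_{u_n}(z^d)$ and $q_{u_n}(z^d)$; hence any common factor of $A(z)p_{u_n}(z^d)$ and $B(z)q_{u_n}(z^d)$ must be a product $C_1(z)C_2(z)$ where $C_1 \mid \gcd(A(z), q_{u_n}(z^d))$ and $C_2 \mid \gcd(B(z), p_{u_n}(z^d))$. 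In particular $r_{c,n}$ is controlled entirely by how the \emph{fixed} polynomials $A$ and $B$ meet the $d$-th power substitutions of the numerator and denominator of the $n$-th gap convergent. Since $\deg A = r_a$ and $\deg B = r_b$ are fixed, $r_{c,n}\le r_a+r_b$ for all $n$, so the sequence takes finitely many values; periodicity is then a statement that the relevant ``state'' stabilizes.

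The key step is to isolate that state. I would let $\zeta$ range over the (finitely many) roots of $A(z)B(z)$ in $\overline{\QQ}$, and note that $\zeta$ divides into the gcd computation only through the values $q_{u_n}(\zeta^{d})$ and $p_{u_n}(\zeta^{d})$ — more precisely through the orders of vanishing of $q_{u_n}$ and $p_{u_n}$ at the various $d$-power preimages of $\zeta$, together with the multiplicity of $\zeta$ as a root of $A$ or $B$. Writing $q_{u_{n+1}}$ (reduced) in terms of $q_{u_n}$ we get $q_{u_{n+1}}(z) = B(z)q_{u_n}(z^d)/C_n(z)$ where $C_n = \gcd(A(z)p_{u_n}(z^d), B(z)q_{u_n}(z^d))$; evaluating the order of vanishing of $q_{u_{n+1}}$ at a root $\eta$ of $A B$ amounts to: (i) the multiplicity of $\eta$ in $B$, plus (ii) $\mathrm{ord}_{\eta^d} q_{u_n}$ (note $\eta\mapsto\eta^d$), minus (iii) the order of vanishing of $C_n$ at $\eta$, which in turn is $\min$ of $\mathrm{ord}_\eta(Ap_{u_n}(z^d))$ and $\mathrm{ord}_\eta(Bq_{u_n}(z^d))$. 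So the vector of local data
$$
\sigma_n := \bigl(\mathrm{ord}_{\zeta} q_{u_n},\ \mathrm{ord}_{\zeta} p_{u_n}\bigr)_{\zeta:\ A(\zeta)B(\zeta)=0\ \text{or}\ \zeta^{d^j}\ \text{is such a root for some }j\ge 1}
$$
determines $r_{c,n}$ \emph{and} determines $\sigma_{n+1}$. The finitely-supported part of this data is what I want to track: because a root of $A(z)B(z)$ has only finitely many $d$-power preimages that are themselves roots, and because $|\eta|$ can be bounded (the convergents have bounded low-degree structure only near these fixed roots), the state $\sigma_n$ lives in a finite set, and the map $\sigma_n\mapsto\sigma_{n+1}$ is a well-defined function on that finite set. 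A function iterated on a finite set is eventually periodic, hence $\sigma_n$ is eventually periodic, and therefore so is $r_{c,n}$.

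The main obstacle I expect is bookkeeping the set of relevant points $\zeta$ and checking it is genuinely finite and stable under the recursion: a priori $q_{u_{n+1}}$ picks up the factor $B(z)$, whose roots $\eta$ feed back (via $\eta\mapsto\eta^d$) into the next step, so one must verify that iterating the preimage-under-$z\mapsto z^d$ operation on $\{$roots of $AB\}$ does not escape a fixed finite set. This is where one uses that we only care about roots of $q_{u_n}$ or $p_{u_n}$ that \emph{coincide} with a root of $A$ or $B$ (or land on one after a power map) — a root $\eta$ of $B$ with $\eta^{d^j}$ never a root of $AB$ contributes to $q_{u_{n+1}}$ but is forever invisible to all later gcd's, so it can be discarded. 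Making this ``only finitely many points ever matter'' precise, and confirming that the induced dynamics on local multiplicity vectors is a genuine self-map of a finite set, is the crux; once that is set up, eventual periodicity is immediate from the pigeonhole principle applied to iteration of a self-map of a finite set.
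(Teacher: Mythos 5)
Your reduction of $r_{c,n}$ to local data is sound as far as it goes: since $\gcd(p_{u_n},q_{u_n})=\gcd(A,B)=1$, one indeed has $\gcd(Ap_{u_n}(z^d),Bq_{u_n}(z^d))=\gcd(A,q_{u_n}(z^d))\cdot\gcd(B,p_{u_n}(z^d))$ up to constants, and the quantity is governed by orders of vanishing at the roots of $AB$ and their images under $z\mapsto z^d$. The paper's proof is organised around the same recursion~\eqref{eq21}. But there is a genuine gap at the decisive step: the claim that the state $\sigma_n=(\mathrm{ord}_\zeta q_{u_n},\mathrm{ord}_\zeta p_{u_n})_\zeta$ \emph{lives in a finite set} is false in general, so the pigeonhole/finite-dynamical-system conclusion does not follow. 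The problem is roots of unity. Take $d=2$ and suppose $\Phi_1(z)=z-1$ divides $A$. Then $\Phi_1(z)\mid A(z^{d^t})$ for \emph{every} $t$, so the unreduced numerator $\prod_{t=0}^{m-1}A(z^{d^t})\,p_u(z^{d^m})$ vanishes at $z=1$ to order at least $m$; since the denominator's order of vanishing there is bounded (this itself requires proof — it is Lemma~\ref{lem11}), the reduced numerator $p_{u_m}$ has $\mathrm{ord}_{1}(p_{u_m})\ge m-c\to\infty$. So one coordinate of your state vector is unbounded, your map is not a self-map of a finite set, and eventual periodicity cannot be concluded by pigeonhole. (A second, more minor, issue you do flag yourself: for non-roots-of-unity $\zeta$ the forward orbit $\zeta^{d^j}$ is infinite, and showing only finitely many $j$ ever matter needs the injectivity argument of Lemma~\ref{lem8}, i.e.\ $\alpha^{d^{m_1}}=\alpha^{d^{m_2}}$ forces $\alpha$ to be zero or a root of unity.)

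This unboundedness is precisely why the paper splits every polynomial into cyclotomic, non-cyclotomic and power-of-$z$ parts and treats them separately. For the non-cyclotomic part the multiplicities genuinely do stabilise (Lemma~\ref{lem8}) and your finite-state argument works essentially verbatim (Lemma~\ref{lem99}). For the cyclotomic part one needs the structure theory of $\Phi_n(z^d)$ (Lemma~\ref{lem9}) and then a case split: when $\Phi_r$ divides neither $A_{r,c}$ nor $B_{r,c}$ the multiplicities are bounded (Lemma~\ref{lem11}) and the finite-state argument applies; but when $\Phi_r\mid A_{r,c}$ (Case~3) the multiplicity on the numerator side diverges, and the correct conclusion is not that the state is finite but that the $\min$ defining the gcd is eventually computed entirely from the bounded denominator side, making $r_{r,c,c,m}$ eventually \emph{constant} (equal to $r_b$ there). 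To repair your proof you would need to add exactly this dichotomy: either both local multiplicities stay bounded (finite state, pigeonhole), or one of them diverges, in which case it must be shown that it diverges monotonically past the other and the local contribution to $r_{c,n}$ freezes.
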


We end this section by showing how Proposition~\ref{prop2} implies
Theorem~\ref{th2}. Let the sequence $(r_{c,n})_{n\in\NN}$ be periodic,
starting from the index $n_0$ and with the period length $P$, i.e.
$r_{c,n_0+i} = r_{n_0+P+i}$ for every $i\in\ZZ_{\ge 0}$. Denote by $R$ the
following value:
$$
R:= d^{P-1}r_{c,n_0}+\ldots + dr_{c,n_0+P-2}+r_{c,n_0+P-1}.
$$

By applying the formulae~\eqref{form_un} for $u_{n_0}, v_{n_0}, u_{n_0+1},
v_{n_0+1},$... up to $u_{n_0+P}, v_{n_0+P}$, we get
$$
u_{n_0+P} = d^Pu_{n_0} + r_b(1+d+\cdots+d^{P-1})-R,\;\; v_{n_0+P} = d^Pv_{n_0} -r_a(1+d+\cdots+d^{P-1})+R.
$$
Define
$$
r_u:= r_b(1+d+\cdots+d^{P-1})-R,\quad\mbox{and}\quad r_v:=r_a(1+d+\cdots+d^{P-1})-R.
$$
Then we get
$$
\lim_{k\to\infty} \frac{u_{n_0+kP}}{v_{n_0+kP}} = \lim_{k\to\infty}\frac{d^{kP}u_{n_0} + (1+d^P+d^{2P}+d^{(k-1)P})r_u}{d^{kP}v_{n_0} - (1+d^P+d^{2P}+d^{(k-1)P})r_v} = \frac{u_{n_0} + \frac{r_u}{d^P-1}}{v_{n_0} - \frac{r_v}{d^P-1}},
$$
which is a rational number. By analogous arguments, the limits of
$u_{n_0+1+kP}/v_{n_0+1+kP}, \ldots$, $u_{n_0+(k+1)P-1}/v_{n_0+(k+1)P-1}$ as
$k\to\infty$ are all rational numbers. Therefore $\limsup_{n\to\infty}
v_n/u_n$, as the maximum of the limits above, is a rational number. This
finishes the proof of Theorem~\ref{th2}.

\section{Proof of Proposition~\ref{prop2}}

We split each of the polynomials $A,B,p_{u_m}$ and $q_{u_m}$ ($m\in\ZZ_{\ge
0}$) into the product of three factors: cyclotomic, non-cyclotomic and the
power of $z$. For example, $A(z) = A_c(z)\cdot A_n(z)\cdot A_0(z)$, where all
roots of $A_c(z)$ are roots of unity, $A_0(z)$ is a power of $z$ and none of
the roots of $A_n(z)$ is either zero or a root of unity. The polynomials
$B_c, B_n, B_0, p_{c,u_m}, p_{n,u_m}, p_{0,u_m}, q_{c,u_m}, q_{n,u_m}$ and
$q_{0,u_m}$ are defined in the same way. Obviously,
$$
\gcd(A_c(z)p_{c,u_m}(z^d), B_n(z) q_{n,u_m}(z^d)) = \gcd(A_n(z)p_{n,u_m}(z^d), B_c(z) q_{c,u_m}(z^d)) = \mathrm{const},
$$
and therefore we can split $r_{c,m}$ into the sum of three parts: $r_{c,m} =
r_{c,c,m}+r_{n,c,m}+r_{0,c,m}$. The first term is the degree of the
cyclotomic part of the gcd in~\eqref{def_rcn}, the second one is the degree
of the non-cyclotomic part of it and the third one is generated by the powers
of $z$ presented in the $\gcd$. We will consider each term separately.

\subsection{Non-cyclotomic term}

\begin{lemma}\label{lem8}
Let $C,D\in \ZZ[x]$ be such that none of their roots is a root of unity. Then
there exists $m_0\in\NN$ such that for all $m>m_0$,
$$
\gcd(C(z), D(z^{d^m})) = \mathrm{const}.
$$
\end{lemma}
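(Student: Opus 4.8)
The plan is to argue that if $\gcd(C(z),D(z^{d^m}))$ were non-constant for infinitely many $m$, then $C$ and $D$ would be forced to share a root that is a root of unity, contradicting the hypothesis. Suppose for contradiction that for infinitely many $m$ there is a common root $\zeta_m$ of $C(z)$ and $D(z^{d^m})$. Since $C$ has only finitely many roots, by pigeonhole there is a single root $\zeta$ of $C$ such that $\zeta$ is a root of $D(z^{d^m})$ for infinitely many $m$; equivalently, $\zeta^{d^m}$ is a root of $D$ for infinitely many $m$. Again $D$ has only finitely many roots, so there exist $m_1<m_2$ with $\zeta^{d^{m_1}} = \zeta^{d^{m_2}}$, hence $\zeta^{d^{m_1}(d^{m_2-m_1}-1)} = 1$ (provided $\zeta\neq 0$, which holds since no root of $C$ is zero — actually the hypothesis only forbids roots of unity, so I should first note that a root $\zeta=0$ of $C$ would need $0$ to be a root of $D(z^{d^m})$, i.e. $D(0)=0$; either way one handles $\zeta=0$ trivially or rules it out). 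Thus $\zeta$ is a root of unity, contradicting the assumption on $C$.

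More carefully: I would phrase it directly. Let $\zeta_1,\dots,\zeta_s$ be the roots of $C$ in $\overline{\QQ}$, none of which is a root of unity (and I note that $0$ is not a root of unity, so if $0$ appears among them the argument still applies, or one observes $\gcd$ divisibility at $z=0$ separately). For each $i$, the set $S_i := \{\, m\in\NN : \zeta_i^{d^m} \text{ is a root of } D \,\}$ is finite: indeed the powers $\zeta_i^{d^m}$ for $m\in S_i$ all lie in the finite root set of $D$, so if $S_i$ were infinite two of these powers would coincide, forcing $\zeta_i$ to satisfy $\zeta_i^{N}=1$ for some $N\ge 1$ (after dividing by the common factor $\zeta_i^{d^{m_1}}$, legitimate since $\zeta_i\neq 0$ — a nonzero algebraic number), i.e. $\zeta_i$ a root of unity, contradiction. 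Hence each $S_i$ is finite, and setting $m_0 := \max_i \max S_i$ (or $m_0=0$ if all $S_i$ are empty), for every $m>m_0$ no root of $C$ is a root of $D(z^{d^m})$, so $\gcd(C(z), D(z^{d^m})) = \mathrm{const}$.

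The only mild subtlety — the main "obstacle", though it is minor — is the bookkeeping about the root $0$: the statement forbids roots of unity but says nothing about $0$, and $\zeta^{d^m}$ with $\zeta=0$ is a degenerate case where one cannot divide. I would dispose of it in one line: if $0$ is a root of $C$, then since $0$ is not among the roots we are worried about (the hypothesis is about roots of unity), but $0^{d^m}=0$ is a root of $D(z^{d^m})$ for all $m$ iff $D(0)=0$; since $D$ has nonzero constant term exactly when $0$ is not its root, one can either assume $C,D$ have nonzero constant terms in this context (as $A,B,p,q$ here are the cyclotomic-and-zero-stripped factors $A_n,B_n$, etc., which by construction have no zero root) or simply observe the statement is really being applied to $A_n, B_n$ with no zero roots. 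Everything else is the clean finiteness-plus-pigeonhole argument above. The remaining cyclotomic and $z$-power contributions to $r_{c,n}$ are handled in the subsequent subsections.
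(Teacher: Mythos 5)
Your argument is exactly the paper's: pigeonhole over the finitely many roots of $C$, then over the finitely many roots of $D$, forcing $\alpha^{d^{m_1}}=\alpha^{d^{m_2}}$ and hence $\alpha$ a root of unity (or zero). Your explicit treatment of the root $0$ is in fact slightly more careful than the paper, which dismisses it with ``root of unity or zero --- a contradiction'' and relies, as you correctly observe, on the lemma being applied only to the zero-stripped factors $A_n, B_n, p_n, q_n$.
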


\begin{proof}
Assume the contrary. Then there exists a root $\alpha$ of $C$ such that
$z-\alpha$ divides $D(z^{d^m})$ for infinitely many values $m$. Hence there
exists a root $\beta$ of $D$ such that
$$
\beta = \alpha^{d^{m_1}} = \alpha^{d^{m_2}}
$$
for some positive integers $m_1\neq m_2$. But the latter is only possible if
$\alpha$ is a root of unity or zero --- a contradiction.
\end{proof}

\begin{lemma}\label{lem99}
The sequence $(r_{n,c,m})_{m\in\NN}$ is eventually periodic.
\end{lemma}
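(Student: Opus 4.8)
The plan is to track the non-cyclotomic part $r_{n,c,m}$ of the gcd in~\eqref{def_rcn} through the recursion~\eqref{eq21}, and show that it stabilizes into a periodic pattern because the relevant non-cyclotomic factors eventually become coprime. Recall that $r_{n,c,m}$ is the degree of $\gcd(A_n(z)\,p_{n,u_m}(z^d),\, B_n(z)\,q_{n,u_m}(z^d))$, where the subscript $n$ denotes the non-cyclotomic part. Since $A_n(z)p_{n,u_m}(z^d)$ and $B_n(z)q_{n,u_m}(z^d)$ are each products of two non-cyclotomic polynomials, the gcd decomposes into four cross-terms:
$$
\gcd(A_n, B_n),\quad \gcd(A_n, q_{n,u_m}(z^d)),\quad \gcd(p_{n,u_m}(z^d), B_n),\quad \gcd(p_{n,u_m}(z^d), q_{n,u_m}(z^d)).
$$
The last term is constant because $p_{u_m}$ and $q_{u_m}$ are coprime (so their non-cyclotomic parts are coprime, and substituting $z\mapsto z^d$ preserves coprimality up to the cyclotomic/zero roots already stripped off). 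The first term is a fixed constant independent of $m$. So the whole problem reduces to understanding the two middle terms $\gcd(A_n(z), q_{n,u_m}(z^d))$ and $\gcd(p_{n,u_m}(z^d), B_n(z))$ as $m$ grows.

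The main idea is that the non-cyclotomic parts $p_{n,u_m}$ and $q_{n,u_m}$ grow by the substitution $z\mapsto z^d$ at each step (up to bounded-degree corrections coming from the $A_n, B_n$ factors and the gcd division in~\eqref{eq21}). Concretely, from~\eqref{eq21} the numerator $p_{u_{m+1}}$ is essentially $A(z)p_{u_m}(z^d)$ divided by $C(z)$, whose degree is bounded by $r_a+r_b$; stripping cyclotomic and zero factors, $p_{n,u_{m+1}}(z)$ differs from a divisor of $A_n(z)p_{n,u_m}(z^d)$ by a polynomial of bounded degree. The key structural fact I want is: for any non-cyclotomic polynomial $D$ of bounded degree (here $D = A_n$ or $D = B_n$), the quantity $\deg\gcd(D(z), q_{n,u_m}(z^d))$ is eventually periodic in $m$. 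To get this I would argue root-by-root: a root $\alpha$ of $D$ is a root of $q_{n,u_m}(z^d)$ iff $\alpha^d$ is a root of $q_{n,u_m}$. Now the roots of $q_{n,u_m}$ are, by iterating~\eqref{eq21}, essentially images under $z\mapsto z^{d^{m-j}}$ of roots of the "seed" polynomials $A_n, B_n$ (and of $q_{n,u_0}$), together with finitely many roots introduced by the bounded-degree gcd corrections at each step. Since $\alpha$ is not a root of unity and not zero, the set of $m$ for which $\alpha^d$ equals such an iterated image is controlled by multiplicative relations of the form $\alpha^{d^k} = \gamma^{d^\ell}$ among the finitely many algebraic numbers involved — and Lemma~\ref{lem8} (applied with $C$ a factor of $D$ and $D'$ a suitable product of seed polynomials) says each such relation holds for only finitely many $m$, unless forced to hold for a structural reason (a genuine multiplicative coincidence $\alpha^{d^a}=\gamma$), in which case it contributes a periodic pattern.

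So the concrete steps are: (i) decompose $r_{n,c,m}$ into the four cross-gcds above and dispose of the two that are constant; (ii) write out the recursion for the non-cyclotomic parts $p_{n,u_m}, q_{n,u_m}$ derived from~\eqref{eq21}, isolating the bounded-degree error polynomials $E_m$ (of degree $\le r_a+r_b$) appearing at each step; (iii) show that the multiset of roots of $q_{n,u_m}$ consists of $d^m$-th-power images of a fixed finite set of algebraic numbers plus the roots of $E_1(z^{d^{m-1}}), E_2(z^{d^{m-2}}),\dots$; (iv) for each root $\alpha$ of $A_n$ (resp. $B_n$), use Lemma~\ref{lem8} to show that $\alpha$ divides $q_{n,u_m}(z^d)$ (resp. $p_{n,u_m}(z^d)$) for a set of $m$ that is eventually periodic — the images of the fixed finite set give exact periodicity because $\alpha^{d^{a}} = \gamma$ propagates to $\alpha^{d^{a+\ell \cdot (\text{period})}} = \gamma^{d^{\ell\cdot(\text{period})}}$, while the error terms contribute only finitely often by Lemma~\ref{lem8}; (v) sum the (bounded, integer-valued, eventually periodic) contributions over the finitely many roots of $A_n$ and $B_n$ to conclude $r_{n,c,m}$ is eventually periodic.

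The hard part will be step (iii)–(iv): controlling precisely which algebraic numbers appear as roots of $q_{n,u_m}$ once the bounded-degree error polynomials $E_m$ are fed in repeatedly, and verifying that these errors cannot conspire to produce new non-cyclotomic roots infinitely often in an aperiodic way. The saving grace is that each $E_m$ has degree uniformly bounded by $r_a+r_b$, so at step $m$ only $O(m)$ "extra" roots have ever been introduced, each of bounded degree over $\QQ$; pairing this with Lemma~\ref{lem8} (which kills each fixed multiplicative coincidence after finitely many $m$) and with the pigeonhole/finiteness of the relevant algebraic numbers should force the pattern to be eventually periodic. I would also need to check that the division by $C(z)$ in~\eqref{eq21} never removes non-cyclotomic factors in an uncontrolled way — but since $\deg C \le r_a + r_b$ is bounded, any such removal is again a bounded-degree perturbation and is absorbed into the $E_m$ bookkeeping.
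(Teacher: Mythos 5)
Your opening reduction is fine: because $\gcd(A,B)$ and $\gcd(p_{u_m}(z^d),q_{u_m}(z^d))$ are constants, every irreducible factor of $\gcd\bigl(A_n(z)p_{n,u_m}(z^d),\,B_n(z)q_{n,u_m}(z^d)\bigr)$ lands, with its full multiplicity, in exactly one of $\gcd(A_n(z),q_{n,u_m}(z^d))$ or $\gcd(B_n(z),p_{n,u_m}(z^d))$, so the degree really is the sum of the two cross-terms. Invoking Lemma~\ref{lem8} to decouple $A_n(z)$ and $B_n(z)$ from all factors at levels $t\ge m_0$ is also exactly the right first move, and it is the paper's first move too. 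But the part you yourself flag as ``the hard part'' is where the entire content of the lemma sits, and your proposed mechanism does not close it. Two concrete problems. First, the picture in steps (ii)--(iii) is wrong: the recursion~\eqref{eq21} never \emph{introduces} roots --- $q_{u_{m+1}}$ is $B(z)q_{u_m}(z^d)$ divided by a factor of $C(z)$, so the roots of $q_{n,u_m}$ are always a sub-multiset of the roots of $\prod_{t=0}^{m-1}B_n(z^{d^t})\cdot q_{n,u_0}(z^{d^m})$; there are no additive ``error polynomials $E_m$'' contributing new algebraic numbers. Second, and decisively, whether $(z-\alpha)$ divides $q_{n,u_m}(z^d)$ for a root $\alpha$ of $A_n$ is \emph{not} settled by multiplicative coincidences $\alpha^{d^k}=\gamma^{d^\ell}$: after Lemma~\ref{lem8} the candidate factors all sit at the finitely many shallow levels $t<m_0$, and the real question is whether those candidates have \emph{survived} the successive cancellations by $C(z)$ up to step $m$. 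Your proposal asserts that this survival pattern is eventually periodic by ``pigeonhole/finiteness'' but never identifies the finite set of states to which pigeonhole is applied, and the multiplicative-relations analysis you substitute for it answers a different (and static) question.

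The paper supplies precisely the missing mechanism. Write $p_{n,u_m}/q_{n,u_m}=\prod_{t=0}^{m-1}A^*_{t,m}(z)\,p^*_m(z)\big/\prod_{t=0}^{m-1}B^*_{t,m}(z)\,q^*_m(z)$ in lowest terms with $A^*_{t,m}\mid A(z^{d^t})$, $B^*_{t,m}\mid B(z^{d^t})$, $p^*_m\mid p_n(z^{d^m})$, $q^*_m\mid q_n(z^{d^m})$ and normalized leading coefficients. By the choice of $m_0$, for $m>m_0$ the deep factors ($t>m_0$) neither cancel nor are cancelled, so both the output $r_{n,c,m}$ and the next shallow state $(A^*_{t,m+1},B^*_{t,m+1})_{t\le m_0}$ are functions of the current shallow state $(A^*_{t,m},B^*_{t,m})_{t\le m_0}$ alone. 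Each coordinate of the state is a normalized divisor of a fixed polynomial, so the state space is finite; pigeonhole yields $m_1<m_2$ with equal states, and the autonomous evolution then forces $r_{n,c,m_1+i}=r_{n,c,m_2+i}$ for all $i$. If you insist on the root-by-root formulation you will end up rebuilding this finite-state argument in any case, so you should adopt it directly.
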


\begin{proof}
From Lemma~\ref{lem8} fix $m_0$ so that
$$
\gcd(A_n(z), B_n(z^{d^m})q_{n,u}(z^{d^m})) = \gcd(B_n(z), A_n(z^{d^m})p_{n,u}(z^{d^m})) = \mathrm{const}
$$
for all $m\ge m_0$. Write the non-cyclotomic part of the convergent
$p_{u_m}/q_{u_m}$ in the following form:
$$
\frac{p_{n,u_m}}{q_{n,u_m}} = \frac{\prod_{t=0}^{m-1} A^*_{t,m}(z) p^*_m(z)}{\prod_{t=0}^{m-1}B^*_{t,m}(z) q^*_m(z)},
$$
where the numerator and denominator of the right hand side are coprime;
$A^*_{t,m}(z) \mid A(z^{d^t})$, $B^*_{t,m}(z)\mid B(z^{d^t})$, $p^*_m(z) \mid
p_n(z^{d^m})$ and $q^*_m(z) \mid q_n(z^{d^m})$; the leading coefficients of
$A_{t,m}^*(z), B^*_{t,m}(z), p_m^*(z), q_m^*(z)$ coincide with those of
$A(z), B(z), p_{n}(z)$ and $q_{n}(z)$ respectively. Then for $m>m_0$ the
degree $r_{n,c,m}$ of
$$
\gcd(A_n(z) p_{n,u_{m+1}}(z^d), B_n(z) q_{n,u_{m+1}}(z^d))
$$
as well as the polynomials $A^*_{t,m+1}, B^*_{t,m+1}$, $t\in\{0,\ldots,
m_0\}$, depend entirely on the polynomials $A^*_{0,m}$, $A^*_{1,m}, \ldots$,
$A^*_{m_0,m}$ and $B^*_{0,m}, \ldots, B^*_{m_0,m}$. But there are finitely
many such combinations. Therefore one can find $m_2>m_1>m_0$ such that
$A^*_{0,m_1} = A^*_{0,m_2}$, \ldots, $A^*_{m_0,m_1} = A^*_{m_0,m_2}$ and
$B^*_{0,m_1} = B^*_{0,m_2}$, \ldots, $B^*_{m_0,m_1} = B^*_{m_0,m_2}$. Then we
get $r_{n,c,m_1} = r_{n,c,m_2}, r_{n,c,m_1+1} = r_{n,c,m_2+1}$, etc. Hence
the sequence of $(r_{c,n,m})_{m\in \NN}$ is eventually periodic.
\end{proof}

\subsection{Powers of $z$}

We write
$$
A_0(z) = z^{s_a};\quad B_0(z)=z^{s_b};\quad p_{0,u}(z) = z^{s_p}\quad\mbox{and}\quad q_{0,u}(z) = z^{s_q}.
$$
Since $A$ and $B$ are coprime and $p_u$ and $q_u$ are coprime, we have that
at least one value of $s_a, s_b$ and at least one of $s_p, s_q$ is zero. If
we have $s_a = s_p=0$ (or $s_b=s_q=0$) then, by~\eqref{eq21}, the powers of
$z$ of all numerators (denominators) are zero and hence
$(r_{0,c,m})_{m\in\ZZ_{ge 0}}$ is the zero sequence.

Now without loss of generality assume that $s_a>0, s_q>0, s_b=s_p=0$. Denote
by $s_{p,m}$ and $s_{q,m}$ the maximal powers of $z$ of $p_{u_m}$ and
$q_{u_m}$ respectively. Notice that, if for some $m_0\in\NN$ the value
$s_{q,m_0}$ is zero then, as before, the sequence $r_{0,c,m}$ becomes zero
for all $m\ge m_0$. On the other hand, if $s_{q,m}$ is positive for all
$m\in\ZZ_{\ge 0}$ then the power of $z$ of $q_{u_m}(z^d)$ is always bigger
than that of $A(z)$, which follows that $r_{0,c,m}$ equals $s_a$ for all
$m\in\ZZ_{\ge 0}$.

In all cases we have that the sequence $(r_{0,c,m})_{m\in\ZZ_{\ge 0}}$ is
eventually periodic.

\subsection{Cyclotomic term}

Note that each of the polynomials $A_c, B_c, p_{c,u}, q_{c,u}$ is a (possibly
empty) product of cyclotomic polynomials $\Phi_n(z)$. We start by
investigating the structure of polynomials $\Phi_n(z^d)$ as $d$ changes. That
requires some notation. Given $n\in \NN$, the radical of $n$ is the product
of all prime divisors of $n$, i.e.:
$$
\rad(n):= \prod_{{p\in \PP}\atop{p\mid n}} p.
$$
For two positive integers $n$ and $m$, by $r(n,m)$ we denote the biggest
divisor of $n$ which is coprime with $m$, and $s(n,m):=n/r(n,m)$.

\begin{lemma}\label{lem9}
Let $n,d$ be two positive integers. The polynomial $\Phi_n(z^d)$ is a product
of cyclotomic polynomials. More precisely,
$$
\Phi_n(z^d) = \prod_{r\mid r(d,n)} \Phi_{rns(d,n)}(z).
$$
\end{lemma}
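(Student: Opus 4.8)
The plan is to analyze the roots of $\Phi_n(z^d)$ directly. The roots of $\Phi_n(z^d)$ are exactly those $\zeta$ for which $\zeta^d$ is a primitive $n$th root of unity. So I want to count, for each primitive $n$th root of unity $\omega$, the $d$th roots $\zeta$ of $\omega$ and determine the order of each such $\zeta$. If $\zeta^d=\omega$ has order $n$, then $\zeta$ has order $m$ where $d\mid ?$ — more precisely, the order of $\zeta$ is some $m$ with $m/\gcd(m,d)\cdot(\text{stuff})$... Let me set it up cleanly: write $\zeta$ of order $m$; then $\zeta^d$ has order $m/\gcd(m,d)$, and we need $m/\gcd(m,d)=n$. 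So the first step is to characterize all $m$ with $m/\gcd(m,d)=n$, and for each such $m$ count how many order-$m$ elements $\zeta$ satisfy $\zeta^d=\omega$ for a fixed primitive $n$th root $\omega$.

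The key number-theoretic step: solve $m/\gcd(m,d)=n$. Factor $m$ according to primes dividing $d$ and primes not dividing $d$. If $p\nmid d$ then the $p$-part of $m$ equals the $p$-part of $n$. If $p\mid d$, write $v_p(d)=e_p$, $v_p(n)=f_p$; then $v_p(m/\gcd(m,d))=v_p(m)-\min(v_p(m),e_p)=f_p$. If $f_p>0$ this forces $v_p(m)=f_p+e_p$; if $f_p=0$ this forces $v_p(m)\le e_p$, i.e.\ $v_p(m)$ is anything from $0$ to $e_p$. So, writing $n=r(d,n)\cdot s(d,n)$ where $r(d,n)$ collects primes of $n$ coprime to $d$ and $s(d,n)$ collects primes of $n$ dividing $d$ (matching the paper's notation), the solutions are exactly $m=r(d,n)\cdot s(d,n)\cdot d^{*}\cdot(\text{divisor of }r(d,?))$... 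I need to be careful, but the upshot should be: $m$ ranges over $m = r\cdot n\cdot s(d,n)$ wait — let me just trust the claimed formula and verify it. The claimed identity is $\Phi_n(z^d)=\prod_{r\mid r(d,n)}\Phi_{r\,n\,s(d,n)}(z)$. Hmm, that product is over divisors $r$ of $r(d,n)$, and each factor is $\Phi_{r\,n\,s(d,n)}$. So the orders appearing are $m=r\cdot n\cdot s(d,n)$ for $r\mid r(d,n)$. Check degrees: $\deg\Phi_n(z^d)=d\,\varphi(n)$, and $\sum_{r\mid r(d,n)}\varphi(r\,n\,s(d,n))$. Since $\gcd(r, n\,s(d,n))$ can be nontrivial, this needs the multiplicativity of $\varphi$ carefully; the degree check will be one verification. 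The second verification is that every root of the right side is a root of the left side and conversely, with correct multiplicity — but cyclotomic polynomials are squarefree and pairwise coprime, and $\Phi_n(z^d)$ is squarefree (since $\Phi_n$ is squarefree with nonzero roots and $z\mapsto z^d$ is separable away from $0$ in characteristic $0$), so it suffices to show the two root sets coincide.

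So the concrete steps are: (1) State that $\Phi_n(z^d)$ is squarefree, so it equals $\prod_{m\in S}\Phi_m(z)$ where $S$ is the set of orders of its roots; (2) show $\zeta$ is a root of $\Phi_n(z^d)$ iff $\mathrm{ord}(\zeta)=m$ with $m/\gcd(m,d)=n$; (3) do the prime-by-prime analysis above to show this set of $m$ is exactly $\{r\,n\,s(d,n) : r\mid r(d,n)\}$; (4) conclude. The main obstacle I anticipate is getting the $p$-adic valuation bookkeeping in step (3) exactly right — in particular reconciling the condition $v_p(m)\le e_p$ (for primes $p\mid d$ with $p\nmid n$) with the compact form $m=r\cdot n\cdot s(d,n)$, $r\mid r(d,n)$: this requires noting that $n\,s(d,n)$ already supplies the full $d$-part forced at primes dividing $\gcd(d,n)$, while the free divisor $r$ of $r(d,n)$ must simultaneously account for the primes dividing $d$ but not $n$ — which suggests I should double-check the formula on a small example (say $n=2$, $d=4$, where $r(d,n)=1$, $s(d,n)=2$, giving $\Phi_2(z^4)=\Phi_8(z)$: indeed $z^4+1=\Phi_8(z)$, correct) before committing to the general argument.
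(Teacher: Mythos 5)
Your proof is correct, and it reaches the lemma by a route that differs from the paper's in its bookkeeping, though both start from the same observation that the roots of $\Phi_n(z^d)$ are exactly the $d$-th roots of the primitive $n$-th roots of unity. The paper fixes a primitive root $\xi_n^i$, writes each of its $d$-th roots as $\xi_{nd}^{nj+i}$, and partitions the exponent set $\{x \bmod nd : \gcd(x,n)=1\}$ according to $t=\gcd(x,d)$, reading off the factor $\Phi_{rns(d,n)}$ with $r=r(d,n)/t$ from each class; multiplicities are never isolated as a separate issue. You instead characterize the possible orders $m$ of a root via the equation $m/\gcd(m,d)=n$ and solve it prime by prime ($v_p(m)=v_p(n)$ for $p\nmid d$; $v_p(m)=v_p(n)+v_p(d)$ for $p\mid\gcd(d,n)$; $0\le v_p(m)\le v_p(d)$ for $p\mid d$, $p\nmid n$), which is exactly the set $\{r\,n\,s(d,n): r\mid r(d,n)\}$; combined with your (correct) remark that $\Phi_n(z^d)$ is squarefree and that \emph{every} primitive $m$-th root of unity with $m$ in this set is a root, this yields the identity, so the assembly step you were worried about does go through. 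Your version has the merit of making the multiplicity-one point explicit, while the paper's exhibits the roots more concretely. One small slip in your sanity check: for $n=2$, $d=4$ one has $r(4,2)=1$ and hence $s(4,2)=4$ (not $2$), giving index $1\cdot 2\cdot 4=8$ — consistent with your correct conclusion $\Phi_2(z^4)=\Phi_8(z)$.
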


\begin{proof}
All the roots of $\Phi_n(z)$ are of the form $\xi_n^i$, where $\xi_n$ is
$n$-th primitive root of unity, $0\le i<n$ and $\gcd(i,n)=1$. Therefore the
roots $\xi$ of $\Phi_n(z^d)$ are the solutions of the equation $\xi^d =
\xi_n^i$, which can be written as
$$
\xi_{nd}^i \cdot \xi_d^j = \xi_{nd}^{nj+i},
$$
where $0\le j<d$. The values $nj+i$ run through the set $\NNN$ of all numbers
between zero and $nd$, which are coprime with $n$. Split this set into
subsets
$$
\NNN_t:=\{x\in\NNN\;:\; \gcd(d,x)=t\}.
$$
Obviously, they are non-empty only if $t\mid d$ and $\gcd(t,n)=1$. These two
conditions are equivalent to $t\mid r(d,n)$. Denote by $r$ the fraction
$r(d,n)/t$. Notice that for any $x\in\NNN_t$ one has $\xi_{nd}^x =
\xi_{nd/t}^{x/t}$ where $x/t$ is coprime with $nd/t$. Finally, write $nd/t =
rns(d,n)$, so the numbers $\xi_{nd}^x$ are the roots of the polynomial
$\Phi_{rns(d,n)}(z)$ and $\Phi_{rns(d,n)}(z)\mid \Phi_n(z^d)$.
\end{proof}

Write the polynomial $A_c(z)$ as the product:
$$
A_c(z) = \prod_{{r\in\NN}\atop{\gcd(d,r)=1}} A_{r,c}(z),
$$
where $A_{r,c}(z)$ is the product of all $\Phi_n(z)$ such that $\Phi_n(z)\mid
A_c(z)$ and $r(n,d)=r$. Other polynomials $B_{r,c}(z), p_{r,c,u_m}(z),
q_{r,c,u_m}(z)$ are defined analogously. Clearly, among all values $r$ with
$\gcd(r,d)=1$ only finitely many polynomials $A_{r,c}(z)$ have positive
degree.

One of the outcomes of Lemma~\ref{lem9} is that for any $n$ and $m$ in $\NN$
every cyclotomic divisor $\Phi_k(z)$ of $\Phi_n(z^{d^m})$ has $r(k,d) =
r(n,d)$. Therefore we can split $r_{c,c,m}$ into the sum:
$$
r_{c,c,m} = \sum_{{r\in\NN}\atop{\gcd(d,r)=1}} r_{r,c,c,m},
$$
where
$$
r_{r,c,c,m} = \gcd(A_{r,c}(z)p_{r,c,u_m}(z^d), B_{r,c}(z)q_{r,c,u_m}(z^d)).
$$
Only finitely many of the sequences $(r_{r,c,c,m})_{m\in\NN}$ are non-zero.

It remains to show that every non-zero sequence $(r_{r,c,c,m})_{m\in\NN}$ is
eventually periodic.

{\bf Case 1.} Assume that among the divisors of $A_{r,c}, B_{r,c}, p_{r,c,u},
q_{r,c,u}$ there are no polynomials $\Phi_r(z)$. From Lemma~\ref{lem9} we
know that all divisors $\Phi_k(z)$ of $\Phi_n(z^{d^m})$ satisfy $n
s(d,n)^m\mid k$. Consider a divisor $\Phi_n(z)$ of one of the polynomials
$A_{r,c}, B_{r,c}, p_{r,c,u}, q_{r,c,u}$. Since $n\neq r$, and $r(n,d)=r$, we
have $s(d,n)>1$ and therefore, as $m$ tends to infinity, all divisors
$\Phi_k(z)$ of $\Phi_n(z^{d^m})$ satisfy $k\to\infty$. Therefore there exists
$m_0$ such that for $m>m_0$
$$
\gcd(A_{r,c}(z), \Phi_n(z^{d^m})) = \gcd(B_{r,c}(z), \Phi_n(z^{d^m})) = \mathrm{const}.
$$
Then the proof of Proposition in this case is analogous to that of
Lemma~\ref{lem99}.

Before considering the other cases, we need more notation and lemma. Given
two polynomials $f(z),g(z)\in\ZZ[z]$ with $\deg(f)>0$ denote by $\sigma(f,g)$
the maximal power of $f$ which divides $g$, i.e.
$$
\sigma(f,g):=\max\{n\in\ZZ_{\ge 0}\;:\; (f(z))^n\mid g(z)\}.
$$

\begin{lemma}\label{lem11}
For any $f(z)\in\ZZ[z]$ and any $k\in \NN$ there exists a constant $c =
c(f,k)$ such that for any $m\in\NN$, $\sigma(\Phi_k(z), f(z^{d^m})) <c$.
\end{lemma}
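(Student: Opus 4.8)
The plan is to bound $\sigma(\Phi_k(z), f(z^{d^m}))$ uniformly in $m$ by controlling how cyclotomic factors can repeat in $f(z^{d^m})$. First I would observe that by Lemma~\ref{lem9}, $\Phi_k(z)$ divides $f(z^{d^m})$ with multiplicity $\sigma(\Phi_k, f(z^{d^m}))$ equal to $\sum_n \sigma(\Phi_n, f) \cdot [\text{$\Phi_k(z)\mid \Phi_n(z^{d^m})$}]$, where the sum runs over the cyclotomic divisors $\Phi_n$ of $f$. So it suffices to show that for each fixed $n$, the polynomial $\Phi_n(z^{d^m})$ is \emph{squarefree} — indeed, Lemma~\ref{lem9} exhibits $\Phi_n(z^d)$ as a product of \emph{distinct} cyclotomic polynomials $\Phi_{rns(d,n)}(z)$ for $r\mid r(d,n)$, since the indices $rns(d,n)$ are pairwise distinct as $r$ ranges over divisors of $r(d,n)$; iterating $m$ times keeps squarefreeness because each application produces cyclotomics with pairwise distinct indices. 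Hence $\sigma(\Phi_k(z), \Phi_n(z^{d^m}))\le 1$ for every $k,n,m$.

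With that in hand, the bound follows immediately: writing $f(z) = c\prod_j \Phi_{n_j}(z)^{e_j} \cdot g(z)$ where $g$ has no cyclotomic factors and noting $g(z^{d^m})$ contributes nothing to $\sigma(\Phi_k, \cdot)$, we get
$$
\sigma(\Phi_k(z), f(z^{d^m})) = \sum_j e_j\,\sigma(\Phi_k(z), \Phi_{n_j}(z^{d^m})) \le \sum_j e_j =: c(f,k),
$$
a constant independent of $m$. (One can take the slightly cleaner bound $c = \sigma$-total $= \sum_j e_j$, the total number of cyclotomic factors of $f$ counted with multiplicity; it does not even depend on $k$.)

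The one point requiring a little care — and the step I expect to be the main obstacle — is verifying that $\Phi_n(z^{d^m})$ is genuinely squarefree, i.e. that the distinct-index claim survives iteration. Applying Lemma~\ref{lem9} once gives $\Phi_n(z^d) = \prod_{r\mid r(d,n)}\Phi_{rns(d,n)}(z)$ with distinct indices. Applying it again to each factor $\Phi_{rns(d,n)}(z^d)$ produces factors indexed by $r'\cdot rns(d,n)\cdot s(d, rns(d,n))$; I would check that across all choices of the outer $r$ and inner $r'$ these indices remain pairwise distinct — this is where one uses that $r(d,\cdot)$ and $s(d,\cdot)$ interact multiplicatively with the coprimality-to-$d$ structure, so that the map recording $(r\text{-part coprime to }d,\ \text{$d$-part})$ is injective at each stage. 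Concretely, $\Phi_a(z)$ and $\Phi_b(z)$ are coprime iff $a\ne b$, and at each stage the set of indices produced is exactly $\{k : n s(d,n)^m \mid k,\ r(k,d)=r(n,d),\ k/(n s(d,n)^m)\text{ squarefree and coprime to } d\}$, whose elements are distinct by construction. Once squarefreeness of each $\Phi_n(z^{d^m})$ is established, the lemma is proved.
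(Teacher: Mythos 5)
Your proof is correct, but it takes a genuinely different route from the paper's. The paper writes $k=rs$ with $r=r(k,d)$, isolates the sub-product $f_r$ of cyclotomic factors of $f$ that can interact with $\Phi_k$ under $z\mapsto z^{d^m}$, and then runs an induction on the $d$-part $s$ of $k$, bounding $\sigma(\Phi_{rS}(z),f_r(z^{d^m}))$ by $\alpha(S)$ plus the bounds already obtained for the proper divisors $s\mid S$. You instead prove the single structural fact that $\Phi_n(z^{d^m})$ is squarefree, after which the bound $\sigma(\Phi_k(z),f(z^{d^m}))\le\sum_j e_j$ is immediate by additivity of multiplicities over the factorization $f=c\prod_j\Phi_{n_j}^{e_j}\cdot g$ (and $g(z^{d^m})$ is indeed coprime to $\Phi_k$, since $d^m$-th roots of numbers that are neither $0$ nor roots of unity are not roots of unity). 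This is cleaner and gives more: your constant is independent of $k$, whereas the paper's induction produces a $k$-dependent bound; note also that the paper's inductive step silently uses the very same fact $\sigma(\Phi_{rS}(z),\Phi_{rS}(z^{d^m}))\le 1$ that you make central.

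The only part of your write-up I would tighten is the verification of squarefreeness itself. Tracking the indices produced by iterating Lemma~\ref{lem9} and arguing they stay pairwise distinct is workable but borders on circular (distinctness of the indices appearing in $\Phi_n(z^{d^m})$ is essentially equivalent to squarefreeness). A two-line argument avoids all of this: the roots of $\Phi_n(z^{d^m})$ are the $d^m$-th roots of the roots of $\Phi_n$; distinct roots of $\Phi_n$ have disjoint sets of $d^m$-th roots, each such set has exactly $d^m$ distinct nonzero elements, and $\Phi_n$ is separable, so $\Phi_n(z^{d^m})$ has $d^m\varphi(n)$ distinct roots and is squarefree. (Equivalently: a multiple root $\xi$ of $g(z)=\Phi_n(z^{d^m})$ would satisfy $g'(\xi)=d^m\xi^{d^m-1}\Phi_n'(\xi^{d^m})=0$ with $\xi\ne 0$, forcing $\xi^{d^m}$ to be a multiple root of $\Phi_n$.) With that substitution your proof is complete.
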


\begin{proof}
We write $k = rs$ where $r = r(k,d)$ is coprime with $d$ and the radical of
$s$ divides the one of $d$. Split $f$ as a product $f = f_r\cdot g$, where
all the roots of $g$ are either not roots of unity or they are roots of unity
of degree $k'$ with $r(k',d)\neq r$. The function $f_r$ is defined as
follows:
$$
f_r(z) = \prod_{\rad(s_i)\,\mid\, \rad(d)} (\Phi_{rs_i}(z))^{\alpha(s_i)}.
$$
Then $g(z^{d^m})$ is always coprime with $\Phi_k(z)$ and $\sigma(\Phi_k(z),
f(z^{d^m})) = \sigma(\Phi_k(z), f_r(z^{d^m}))$.

We finish the proof of the lemma by  induction on $s$. For $s=1$,
Lemma~\ref{lem9} implies that $$\sigma(\Phi_r(z), f_r(z^{d^m})) =
\sigma(\Phi_r(z), (\Phi_r(z^{d^m}))^{\alpha(1)}) = \alpha(1).$$ Now, consider
$S\in \NN$ with $\rad(S)\mid \rad(d)$. Assume that the statement of the lemma
is satisfied for all $s<S$ with $\rad(s)\mid d$, i.e. for any such $s$ there
exists a constant $c(s)$ such that $\sigma(\Phi_{rs}(z), f_r(z^{d^m}))\le
c(s)$. Now we prove the statement for $S$. Lemma~\ref{lem9} implies that
$$
\sigma(\Phi_{rS}(z), f_r(s^{d^m}))\le \sigma(\Phi_{rS}(z), (\Phi_{rS}(z^{d^m}))^{\alpha(S)}) + \!\!\!\!\sum_{s\mid S,\; s<S}\!\!\!\! \sigma(\Phi_{rs}(z),f_r(z^{d^{m-1}})) \le \alpha(S)+\!\!\!\!\sum_{s\mid S,\; s<S}\!\!\! c(s).
$$
Since the right hand side does not depend on $m$, the proof is finished.
\end{proof}

{\bf Case 2.} Assume that $\Phi_r(z)$ divides $p_{r,c,u}(z)$ and
$$\gcd(\Phi_r(z), A_{r,c}(z)) = \gcd(\Phi_r(z), B_{r,c}(z)) =
\mathrm{const}.$$

Note that the case $\Phi_r(z)\mid q_{r,c,u}(z)$ can be dealt analogously: we
just swap $A_{r,c}$ with $B_{r,c}$ and $p_{r,c,u}$ with $q_{r,c,u}$.

Write $A_{r,c}(z)$ and $B_{r,c}(z)$ as
$$
A_{r,c}(z) = \prod_{i=1}^n \Phi_{rs_i}(z),\quad B_{r,c}(z) = \prod_{i=n+1}^{n+n^*} \Phi_{rs_i}(z).
$$
Let $\SSS$ be the set of all positive integers $s$ which divide one of the
values $s_i$, $1\le i\le n+n^*$, i.e.
$$
\SSS:=\{s\in\NN\;:\; \exists i\in\{1,\ldots, n+n^*\},\; s\mid s_i\}.
$$

Recall that $p_{r,c,u_{m+1}}/q_{r,c,u_{m+1}}$ can be written in the form
$$
\frac{p_{r,c,u_{m+1}}(z)}{q_{r,c,u_{m+1}}(z)} = \frac{A_{r,c}(z)p_{r,c,u_m}(z^d)}{B_{r,c}(z)q_{r,c,u_m}(z^d)}.
$$
Then the value $r_{r,c,c,m+1}$ is completely determined by two tuples
$\Sigma_{p,m}$ and $\Sigma_{q,m}$ which are defined as follows:
$$
\Sigma_{p,m}:=(\sigma_{\zeta_1,m}, \ldots, \sigma_{\zeta_N,m}),\quad\mbox{where}\; N=\#\SSS,\; \zeta_1,\ldots, \zeta_n\in\SSS,\;\sigma_{\zeta,m}:=\sigma(\Phi_{r\zeta}(z),p_{r,c,u_m}(z^d))
$$
and $\Sigma_{q,m}$ is defined analogously with $p_{r,c,u_m}$ replaced by
$q_{r,c,u_m}$. By Lemma~\ref{lem9}, we have that $\Sigma_{p,m+1}$ and
$\Sigma_{q,m+1}$ are also determined by $\Sigma_{p,m}$ and $\Sigma_{q,m}$
respectively.

It remains to show that all terms of $\Sigma_{p,m}$ and $\Sigma_{q,m}$ are
bounded by a constant independent of $m$. That in turn will imply that there
are only finitely many different values for $(\Sigma_{p,m}, \Sigma_{q,m})$
and there exist $m_1<m_2$ such that $\Sigma_{p,m_1} = \Sigma_{p,m_2},
\Sigma_{q,m_1} = \Sigma_{q,m_2}$, hence the sequence
$(r_{r,c,c,m})_{m\in\NN}$ is eventually periodic end the proof of
Proposition~\ref{prop2} is completed for this case.

Write the part $p_{r,c,u_m}/q_{r,c,u_m}$ of the convergent $p_{u_m}/q_{u_m}$
in the following form:
$$
\frac{p_{r,c,u_m}(z)}{q_{r,c,u_m}(z)} = \frac{\prod_{t=0}^{m-1} A^*_{t,m}(z) p^*_m(z)}{\prod_{t=0}^{m-1}B^*_{t,m}(z) q^*_m(z)},
$$
where the numerator and the denominator of the right hand side are coprime
and $A^*_{t,m}(z) \mid A_{r,c}(z^{d^t})$, $B^*_{t,m}(z)\mid
B_{r,c}(z^{d^t})$, $p^*_m(z) \mid p_{r,c,u}(z^{d^m})$ and $q^*_m(z) \mid
q_{r,c,u}(z^{d^m})$.

Since none of $A_{r,c}(z)$ and $B_{r,c}(z)$ are divisible by $\Phi_{r}(z)$ we
have that there exists $m_0\in\NN$ such that for all $m>m_0$, the polynomials
$A_{r,c}(z^{d^m})$ and $B_{r,c}(z^{d^m})$ are coprime with both $A_{r,c}(z)$
and $B_{r,c}(z)$. Therefore for each term $\sigma_{\zeta,m}$ of
$\Sigma_{p,m}$ we have
$$
\sigma_{\zeta,m} = \sum_{t=0}^{m_0} \sigma(\Phi_{r\zeta}(z), A^*_{t,m} (z^d)) + \sigma(\Phi_{r\zeta}(z), p^*_m(z^d)).
$$
By Lemma~\ref{lem11}, the right hand side is always bounded by some constant
independent of $m$. By analogous arguments, the same is true for all terms
$\sigma_{\zeta,m}$ of $\Sigma_{q,m}$.

{\bf Case 3.} Assume that $\Phi_r(z)$ divides $A_{r,c}(z)$. Then, since
$A_{r,c}(z)$ and $B_{r,c}(z)$ are coprime, we have that $\Phi_r(z)$ does not
divide $B_{r,c}(z)$.

Note that the case $\Phi_r(z)\mid B_{r,c}(z)$ can be handled analogously. We
just swap $A_{r,c}$ with $B_{r,c}$ and $p_{r,c,u}$ with $q_{r,c,u}$.
Therefore Case~3 is the last one which needs to be investigated.

\begin{lemma}\label{lem12}
For any $n\in\NN$ with $r(n,d) = r$ there exists $m\in\NN$ such that
$\Phi_n(z)\mid \Phi_r(z^{d^m})$.
\end{lemma}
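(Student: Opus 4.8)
I want to show that for any $n$ with $r(n,d)=r$, some power $d^m$ brings $\Phi_n$ inside $\Phi_r(z^{d^m})$ as a factor. By Lemma~\ref{lem9}, the cyclotomic divisors of $\Phi_r(z^{d^m})$ are exactly the $\Phi_k(z)$ with $k = r\cdot s(d,r^{(m)})\cdot(\text{some divisor of }r(d,\cdot))$; iterating, one sees that $\Phi_r(z^{d^m})$ is a product of $\Phi_{k}(z)$ over $k$ of the form $r\cdot e$ where $e$ ranges over divisors of $d^m$ whose radical divides $\rad(d)$, and with appropriate constraint $r(k,d)=r$. So the statement reduces to a purely arithmetic fact: writing $n = r\cdot s$ where $r = r(n,d)$ is coprime to $d$ and $\rad(s)\mid\rad(d)$, I need $s\mid d^m$ for some $m$, which is immediate since every prime dividing $s$ divides $d$, hence $s\mid d^m$ once $m$ exceeds all the exponents in the factorization of $s$.

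**Key steps in order.** First I would record the structural consequence of Lemma~\ref{lem9}: for any $m$, $\Phi_r(z^{d^m}) = \prod \Phi_{k}(z)$ where the product is over those $k$ with $r(k,d)=r$ and $s(k,d)\mid d^m$ (i.e. $k/r$ divides $d^m$). This follows by induction on $m$: the base case $m=1$ is Lemma~\ref{lem9} with $n=r$ (so $r(d,r)=r(d,r)$, and since $\gcd(r,d)$ already equals $\gcd$, one checks $r(d,r)$ gives precisely the divisors $r$ of $r(d,r)$ paired with $s(d,r)=$ the $d$-part); the inductive step applies Lemma~\ref{lem9} to each $\Phi_k(z^d)$ appearing and uses multiplicativity of $s(\cdot,d)$ along the tower. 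Second, given $n$ with $r(n,d)=r$, factor $n = r\cdot s$ with $\gcd(r,d)=1$ and $\rad(s)\mid\rad(d)$ (this is exactly the decomposition $n = r(n,d)\cdot s(n,d)$). Third, choose $m$ large enough that $s\mid d^m$; then $k=n$ satisfies both $r(n,d)=r$ and $n/r = s\mid d^m$, so $\Phi_n(z)$ appears in the product for $\Phi_r(z^{d^m})$, giving $\Phi_n(z)\mid\Phi_r(z^{d^m})$.

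**Main obstacle.** The routine-but-delicate part is the first step: verifying that iterating Lemma~\ref{lem9} along the tower $\Phi_r(z^d), \Phi_r(z^{d^2}), \ldots$ really produces all and only the $k$ with $r(k,d)=r$ and $(k/r)\mid d^m$, and in particular that the coprimality parameter $r(k,d)$ stays equal to $r$ at every level (it does, because each application of Lemma~\ref{lem9} to $\Phi_k(z^d)$ only multiplies the index by divisors of $d$ and by $s(d,k)$, neither of which introduces prime factors coprime to $d$, so the $d$-coprime part is preserved). Once that bookkeeping is pinned down, the arithmetic conclusion $s\mid d^m$ is trivial. I expect the cleanest write-up to avoid the full tower description and instead argue directly: take $m$ with $s(n,d)\mid d^m$, set $s=s(n,d)$, and show by a short induction using Lemma~\ref{lem9} that $\Phi_{rs}(z)$ divides $\Phi_r(z^{d^m})$ whenever $s\mid d^m$ and $\rad(s)\mid\rad(d)$ — the inductive step peeling off one prime factor of $s$ via Lemma~\ref{lem9} applied at the appropriate level.
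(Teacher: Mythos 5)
Your proposal is correct, and its main route is organized differently from the paper's. The paper fixes $n=rS$ and inducts on $S$: it writes out the prime factorizations of $S$ and $d$, uses Lemma~\ref{lem9} to find $s\mid S$ with $\Phi_{rS}(z)\mid\Phi_{rs}(z^d)$ and $s<S$, and then appeals to the induction hypothesis to get $\Phi_{rS}(z)\mid\Phi_r(z^{d^{m+1}})$ --- essentially the ``peel off one prime factor at a time'' variant you sketch in your last sentence. Your primary argument instead describes the \emph{entire} factorization of the target $\Phi_r(z^{d^m})$ and then checks that $\Phi_n$ occurs in it; the step you flag as ``routine-but-delicate'' is in fact a one-liner, because Lemma~\ref{lem9} is stated for arbitrary positive integer exponents, so applying it once with $d^m$ in place of $d$ and using $\gcd(r,d^m)=1$ (hence $r(d^m,r)=d^m$, $s(d^m,r)=1$) gives directly
$$
\Phi_r(z^{d^m})=\prod_{e\mid d^m}\Phi_{er}(z),
$$
with no tower induction or bookkeeping about how $r(k,d)$ propagates. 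After that, choosing $m$ with $s(n,d)\mid d^m$ finishes the proof exactly as you say. Both arguments ultimately rest on the same decomposition $n=r(n,d)\,s(n,d)$ and the same trivial arithmetic fact that $s(n,d)\mid d^m$ for large $m$; your global-description version buys a shorter and more transparent write-up, while the paper's induction on $S$ avoids invoking Lemma~\ref{lem9} with a composite exponent but pays for it with the explicit exponent-by-exponent comparison of $S$ and $d$.
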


\begin{proof}
We write $n$ as a product $n = rs$ and prove the lemma by induction on $s$.
For $s=1$ the statement is straightforward. Consider $S$ such that
$\rad(S)\mid \rad(d)$. Assume that the statement is true for all $s<S$ with
$\rad(s)\mid \rad(d)$ and prove it for $S$. Write the prime factorisations of
$S$ and $d$ in the following way:
$$
S = p_1^{\beta_1}\cdots p_k^{\beta_k}p_{k+1}^{\beta_{k+1}}\cdots p_{k+l}^{\beta_{k+l}};\quad d = p_1^{\alpha_1}\cdots p_{k+l}^{\alpha_{k+l}},
$$
where $\beta_1<\alpha_1,\ldots, \beta_k<\alpha_k$ and $\beta_{k+1}\ge
\alpha_{k+1}>0,\ldots, \beta_{k+l}\ge\alpha_{k+l}>0$. Then, by
Lemma~\ref{lem9}, one has that $\Phi_S(z)$ divides $\Phi_s(z^d)$ for $s =
p_{k+1}^{\beta_{k+1}-\alpha_{k+1}}\cdots p_{k+l}^{\beta_{k+l}-\alpha_{k+l}}$.
By induction assumption, we have that there exists $m$ such that
$\Phi_s(z)\mid \Phi_r(z^{d^m})$. Therefore, $\Phi_S(z)\mid
\Phi_r(z^{d^{m+1}})$.
\end{proof}

Similarly to Case~2, define the set $\SSS$ and the following tuple:
$$
\Sigma_{q,m}:=(\sigma_{\zeta_1,m}, \ldots, \sigma_{\zeta_N,m}),\quad\mbox{where}\; N=\#\SSS,\;\; \zeta_1,\ldots, \zeta_n\in\SSS\;\mbox{and}
$$
$$
\sigma_{\zeta,m}:=\sigma\left(\Phi_{r\zeta}(z),\prod_{t=0}^{m-1}B_{r,c}(z^{d^t})\cdot
q_{r,c,u}(z^{d^m})\right).
$$
As in Case~2, we have that all terms in $\Sigma_{q,m}$ are bounded by a
constant, which is independent of $m$. On the other hand, by
Lemma~\ref{lem9}, every polynomial $A_{r,c}(z^{d^t})$ is divisible by
$\Phi_r(z)$ and therefore
$$
\sigma\left(\Phi_{r}(z),\prod_{t=0}^{m-1}A_{r,c}(z^{d^t})\right)\ge m.
$$
In view of Lemma~\ref{lem12}, there exists $m_0$ big enough, so that for any
$\zeta\in\SSS$ and $m\ge m_0$ the value
$$
\sigma\left(\Phi_{r\sigma}(z),\prod_{t=0}^{m-1}A_{r,c}(z^{d^t})\cdot
p_{r,c,u}(z^{d^m})\right)
$$
is bigger than every term in $\Sigma_{q,m}$. That implies that for every
$m>m_0$ every polynomial $(\Phi_{r\zeta}(z))^{\sigma_{\zeta,m}}$ cancels out
in the expression
$$
\frac{p_{r,c,u_m}(z)}{q_{r,c,u_m}(z)} = \frac{\prod_{t=0}^{m-1} A_{r,c}(z^{d^t}) p_{r,c,u}(z^{d^m})}{\prod_{t=0}^{m-1}B_{r,c}(z^{d^t}) q_{r,c,u}(z^{d^m})}.
$$
Hence for $m\ge m_0$ the polynomial $q_{r,c,u_m}(z^d)$ is coprime with
$A_{r,c}(z)$, $B_{r,c}(z)$ divides $p_{r,c,u_m}(z^d)$ and therefore the value
$r_{r,c,c,m}$ is equal to $r_b$. Again we have that the sequence
$(r_{r,c,c,m})_{m\in\NN}$ is eventually periodic.

To finish the proof of Proposition~\ref{prop2} we observe that the sequence
$r_{c,m}$ is the sum of finitely many eventually periodic sequences and hence
is eventually periodic itself. \endproof


\section{Application: $d=3$, infinite products of quadratic polynomials}

Consider the set of Mahler functions $g_\va(z) = g_{a_1,a_2} (z)$ which
satisfy the equation
$$
g_\va (z) = (z^2 +a_1z + a_2) q_\va(z^3);\quad a_1,a_2\in\ZZ.
$$
Such functions and their corresponding Mahler numbers were considered
in~\cite{badziahin_2017} and it was conjectured that, given $b\in\ZZ$ with
$|b|\ge 2$, if $g_\vu(b)\not \in\QQ$ then $\mu(g_\va(b))=2$ for all
$\va\in\ZZ^2$, except the following three families:
\begin{itemize}
\item[(a)] $\va = (s,s^2)$, $s\in\ZZ$;
\item[(b)] $\va = (s^3, -s^2(s^2+1))$, $s\in\ZZ$;
\item[(c)] $\va = (\pm2,1)$.
\end{itemize}

In~\cite[Theorem 9]{badziahin_2017} the lower bounds for the irrationality
exponents of $g_\va(b)$ for those families is provided. Here we demonstrate
how Theorem~\ref{th1} together with Lemmata~\ref{lem6} and~\ref{lem7} can be
used to show that lower bounds in~\cite{badziahin_2017} are sharp.

\medskip
\noindent{\bf Family (a).} Let $\va = (s,s^2)$. Simple calculations reveal
that the first convergent of $g_\va(z)$ is $1/(z-s)$ and
$$
(z-s)g_\va(z) - 1 = (s-s^3)z^{-3}+\ldots
$$
Therefore for $s^3-s\neq 0$ we have that $\Phi(g_\va)$ contains a primitive
gap $[1,3]$ of size 2. Note that $z^{3^m}-s$ is always coprime with the
polynomial $z^2 + sz+s^2$. Indeed, each root $z_0$ of the latter quadratic
polynomial satisfies $|z_0|^3 = |s|^3$, so $|z_0|^{3^m} = |s|^{3^m} > |s|$ as
soon as $|s|\ge 2$. But the last condition is equivalent to $s\neq \pm 1,0$
which in turn is equivalent to $s^3 - s\neq0$.

We thus have that the numerator and the denominator of
$$
\frac{\prod_{t=0}^{m-1} (z^{2\cdot 3^t} + sz^{3^t}+s^2)}{z^{3^m} - s}
$$
are always coprime. Therefore the all values $r_{c,m}$ equal zero and
equations~\eqref{form_un} imply that for the gaps $[u_n,v_n]$ generated by
$[1,3]$ we have
$$
\frac{v_{n+1}}{u_{n+1}} = \frac{3v_n -2}{3u_n}
$$
and therefore
$$
\liminf_{n\to \infty}\frac{v_n}{u_n} = \frac{v_0 - \frac{2}{2}}{u_0 + \frac{0}{2}} = 2.
$$

From Lemma~\ref{lem6} we know that the size of any primitive gap in
$\Phi(g_\va)$ does not exceed 5. Therefore, by Lemma~\ref{lem7}, only gaps
with
$$
2<\frac{v}{u-1}\le \frac{u+5}{u-1}
$$
may contribute to the irrationality exponent of $g_\va(b)$. The last
inequality is equivalent to $u<7$. It remains to note that $\Phi(g_\va) =
\{1,3,7,\ldots\}$ where the gap $[3,7]$ is not primitive and is generated by
$[1,3]$. Hence there are no other primitive gaps $[u,v]$ with $u<7$ and
Theorem~\ref{th1} implies:

\smallskip\noindent{\it Let $\va = (s,s^2)\in\ZZ^2$ with $s^3+s\neq 0$. If $|b|\ge 2$, $b\in\ZZ$ and $g_\va(b)\not\in\QQ$ then
$\mu(g_\va(b)) = 3$.}

For the remaining values of $s$ we have:
$$
g_{0,0}(z) = \frac{1}{z};\quad g_{1,1}(z) = \frac{1}{z-1};\quad g_{-1,1}(z) = \frac{1}{z+1}.
$$
The function $g_\va$ is then rational, and therefore $g_\va(b)\in\QQ$.

\medskip{\bf Family (b).} Let $\va = (s^3,-s^2(s^2+1))$. In this case we
compute
$$
\frac{p_2(z)}{q_2(z)} = \frac{z+s(s^2+1)}{z^2+sz+s^2}
$$
and
$$
q_2(z) g_\va(z) - p_2(z) = -(s^6+s^4+s^2)z^{-5} + \ldots
$$
Therefore for $s^6+s^4+s^2\neq 0$ we have that $\Phi(g_\va)$ contains the
primitive gap $[2,5]$ of size 3. One can easily check that $z^2 + s^3z -
s^2(s^2+1)= (z-s)(z+(s^3+s))$. On the other hand, all roots of $z^{2\cdot
3^m} + sz^{3^m} + s^2$ for $s\neq 0$ are not real. Therefore the fraction
$$
\frac{\prod_{t=0}^{m-1} (z^{2\cdot 3^t} + s^3z^{3^t}-s^2(s^2+1))p_2(z^{3^m})}{q_2(z^{3^m})}
$$
is always in its reduced form, i.e. every term of $r_{c,m}$ is zero. This
yields to
$$
\liminf_{n\to \infty}\frac{v_n}{u_n} = \frac{v_0 - \frac{2}{2}}{u_0 + \frac{0}{2}} = 2.
$$

As for the collection~(a), we need to check that $\Phi(g_\va)$ does not
contain any other primitive gap $[u,v]$ with $u<7$ which is obvious
(by~\eqref{form_un}, we have the big gaps $[2,5]$ and $[6,13]$ in
$\Phi(g_\va)$. There is no more space for big gaps with $u<7$). Therefore we
finally get:

\smallskip\noindent{\it Let $\va = (s^3,-s^2(s^2+1))$ with $s\in \ZZ, s^6+s^4+s^2\neq 0$. If $|b|\ge 2$, $b\in\ZZ$ and $g_\va(b)\not\in \QQ$ then
$\mu(g_\va(b)) = 3$.}

Finally notice that the equation $s^6+s^4+s^2=0$ has only one integer
solution: $s=0$. But $g_{0,0}(z)$ has already been considered in Family~(a)
and is equal to $1/z$.

\medskip\noindent{\bf Family (c).} Let $\va = (2,1)$. The case $\va =
(-2,1)$ is considered analogously and is left to the reader. One can check
that $1,2,3,4$ and $5$ belong to $\Phi(g_\va)$. Direct computation shows that
$[5,8]$ is the primitive gap in $\Phi(g_\va)$ and the corresponding fifth
convergent of $g_\va$ is
$$
p_5(z) = z^4+z^3+2z^2+4\;\mbox{and}\; q_5(z) = z^5-z^4+z^3-z^2+z-1 =
(z-1)(z^2+z+1)(z^2-z+1).
$$
In other words, $q_5(z) = \Phi_1(z)\Phi_3(z)\Phi_6(z)$. Lemma~\ref{lem9}
implies that all cyclotomic divisors of $q_5(z^{3^m})$ are either of the form
$\Phi_{3r}(z)$ with some integer $r$ or $\Phi_1(z)$. Hence $q_5(z^{3^m})$ is
always coprime with $z^2+2z+1 = \Phi_2(z)^2$, i.e. the fraction
$$
\frac{\prod_{t=0}^{m-1} (z^{2\cdot 3^t} + 2z^{3^t}+1)p_5(z^{3^m})}{q_5(z^{3^m})}
$$
is always in its reduced form and every term of $r_{c,m}$ is zero. This
yields to
$$
\liminf_{n\to\infty} \frac{v_n}{u_n} = \frac{v_0-1}{u_0} = \frac75.
$$

Now from Lemma~\ref{lem6} we know that the size of any primitive gap in
$\Phi(g_\va)$ does not exceed 5. Therefore, by Lemma~\ref{lem7}, only gaps
with
$$
\frac75<\frac{v}{u-1}\le \frac{u+5}{u-1}
$$
may contribute to the irrationality exponent of $g_\va(b)$. The last
inequality is equivalent to $u<16$. It remains to show that all integers from
8 to 15 belong to $\Phi(g_\va)$. This for example can be done
(see,~\cite[Corollary 1]{badziahin_2017} for justification) by checking that
the corresponding Hankel determinants
$$
H_n:= \det (c_{i+j-1})_{i,j\in\{1,\ldots, n\}},\quad n=8,\ldots 15
$$
are not zero, where $c_i$ are the coefficients of the series $g_\va$:
$$
g_\va(z) = \sum_{i=1}^\infty c_i z^{-i}.
$$

Finally we have:
\smallskip\noindent{\it Let $\va = (\pm 2,1)$. If $|b|\ge 2$, $b\in\ZZ$ then
$\mu(g_\va(b)) = \frac{12}{5}$.}


\begin{thebibliography}{0}

\bibitem{adamczewski_rivoal_2009} B.~Adamczewski, T.~Rivoal, ``Irrationality
    measures for some automatic real numbers'', Math. Proc. Cambridge Phil.
    Soc., 147 (2009), 659--678.

\bibitem{badziahin_2017} D.~Badziahin, ``Continued fractions of certain
    Mahler functions'', preprint, \url{https://arxiv.org/abs/1702.07457}.

\bibitem{badziahin_zorin_2017} D.~Badziahin, E.~Zorin, ``On irrationality
    measure of the Thue-Morse constant'', Math. Proc. Cambridge Phil.
    Soc., online (2018), 1--18, \url{https://doi.org/10.1017/S0305004118000117}

\bibitem{becker_1994} P.-G.~Beker, ``$k$-regular power series and Mahler-type
    functional equations'', J. Numb. Theor., 49 No. 3 (1994), 269 -- 286.

\bibitem{bugeaud_2008} Y.~Bugeaud, ``Diophantine approximation and Cantor
    sets'', Math. Ann., 341 (2008), 677--684.

\bibitem{bhwy_2016} Y.~Bugeaud, G.~Han, Z.~Wen, J.~Yao, ``Hankel
    determinants, Pad\'{e} approximations and irrationality exponents'',
Int. Math. Res. Notes, 2016 No. 5 (2016), 1467 -- 1496.

\bibitem{bgs_2011} Y.~Bugeaud, D.~Krieger, J.~Shallit, ``Morphic and
    automatic words: maximal blocks and Diophantine approximation'', Acta
    Arith., 149, No. 2 (2011), 181--199.

\bibitem{gww_2014} Y.~Guo, Z.~Wen, W.~Wu, ``On the irrationality exponent of
    the regular paperfolding numbers'', Linear Algebra Appl., 446 (2014), 237 --
    264.

\bibitem{gwz_2016} Y.~Guo, Z.~Wen, J.~Zhang, ``On the rational
    approximations to the real numbers corresponding to the differences of
    the Fibonacci sequence'',  Math. Appl. (Wuhan), 28 (2015), no. 4, 857--864.


\end{thebibliography}
\end{document}